\theoremstyle{plain}
\newtheorem{theorem}{Theorem}[section]
\theoremstyle{definition}
\newtheorem{definition}[theorem]{Definition}
\theoremstyle{remark}
\newtheorem{remark}[theorem]{Remark}
\numberwithin{equation}{section}
\newcommand{\D}{\partial}
\newcommand{\Dlt}{\Delta t}
\newcommand{\Dlx}{\Delta x}
\newcommand{\Fr}{\mathrm{Fr}}
\newcommand{\Ma}{\mathrm{Ma}}
\newcommand{\mbb}{\mathbb}
\newcommand{\mcal}{\mathcal}
\newcommand{\CFL}{\mathrm{CFL}}
\newcommand{\Norm}[1]{{\left\vert\kern-0.25ex\left\vert\kern-0.25ex\left\vert #1 
    \right\vert\kern-0.25ex\right\vert\kern-0.25ex\right\vert}}
\newcommand{\rf}{\mathrm{ref}}
\newcommand{\veps}{\varepsilon}
\begin{document}

\title[Unified AP and Well-balanced Scheme]{A Unified Asymptotic Preserving and
  Well-balanced Scheme for the Euler System with Multiscale Relaxation}       

\author[Arun]{K.~R.~Arun}
\address{School of Mathematics, Indian Institute of Science Education
  and Research Thiruvananthapuram, Thiruvananthapuram 695551, India} 
\email{arun@iisertvm.ac.in}

\author[Krishnan]{M.~Krishnan}
\address{School of Mathematics, Indian Institute of Science Education
  and Research Thiruvananthapuram, Thiruvananthapuram 695551, India} 
\email{meena9916@iisertvm.ac.in}
\thanks{M.~K.\ gratefully acknowledges the INSPIRE Fellowship
  from Department of Science and Technology, Government of India.}

\author[Samantaray]{S.~Samantaray}
\address{Department of Physics, University of Notre Dame, Nieuwland
  Science Hall, Notre Dame, IN 46556, USA} 
\email{ssamanta@nd.edu}

\date{\today}

\subjclass[2010]{Primary 35L45, 35L60, 35L65, 35L67; Secondary 65M06,
  65M08}

\keywords{Compressible Euler system, Multiscale relaxation, Unified
  asymptotic preserving, Finite volume method, Hydrostatic steady
  states, Well-balancing}  

\begin{abstract}
  The design and analysis of a unified asymptotic preserving (AP) and
  well-balanced scheme for the Euler Equations with gravitational and 
  frictional source terms is presented in this paper. The asymptotic
  behaviour of the Euler system in the limit of zero Mach and Froude
  numbers, and large friction is characterised by an additional scaling
  parameter. Depending on the values of this parameter, the Euler
  system relaxes towards a hyperbolic or a parabolic limit equation.
  Standard Implicit-Explicit Runge-Kutta schemes are incapable of
  switching between these asymptotic regimes. We propose a time
  semi-discretisation to obtain a unified scheme which is AP for 
  the two different limits. A further reformulation of the
  semi-implicit scheme can be recast as a fully-explicit method in
  which the mass update contains both hyperbolic and parabolic
  fluxes. A space-time fully-discrete scheme is derived using a finite
  volume framework. A hydrostatic reconstruction strategy, an
  upwinding of the sources at the interfaces, and a careful choice of
  the central discretisation of the parabolic fluxes are used to
  achieve the well-balancing property for hydrostatic steady
  states. Results of several numerical case studies are presented to
  substantiate the theoretical claims and to verify the robustness of
  the scheme.  
\end{abstract}

\maketitle

\section{Introduction}
\label{sec:Intro}

Hyperbolic and kinetic equations with relaxation are ubiquitous in
physical problems, such as in the theory of gases \cite{Cer88},
non-equilibrium thermodynamics \cite{CIP94, CC39}, and linear and
nonlinear waves \cite{Whi74}, to name but a few. The main feature of
relaxation problems is the occurrence of lower order terms in the
governing equations, with respect to a small parameter $\veps$ known
as the relaxation parameter. The asymptotic limit $\veps \to 0$ is
often singular in the sense that the original governing equations of
the problem approach a system of equations of a different mathematical
and physical nature in the limit. A rigorous and systematic analysis of
relaxation problems is due to Liu and collaborators \cite{CLL94,
  Liu87}, which later lead to the development of the so-called
relaxation schemes by Jin and Xin \cite{JX95}. Depending on the
particular scaling of the relaxation parameter $\veps$, hyperbolic
relaxation models can yield either hyperbolic or viscous conservation
laws as their asymptotic limits. We refer the interested reader to,
e.g.\ \cite{AN96, Bou99, Nat96, Nat99} for a detailed analysis of
relaxation systems. In \cite{BPR17}, a new class of relaxation
problems, termed as multiscale relaxation problems, is introduced
wherein multiple scalings of the stiff terms are identified by various
powers of the relaxation parameter $\veps$ involving an exponent
$\beta \in [0,1]$. When $\beta \in [0,1)$, the asymptotic limit of
the relaxation system is hyperbolic in nature, and the limit $\veps
\to 0$ is known as hyperbolic-to-hyperbolic relaxation. On the other
hand, $\beta=1$ gives a diffusion equation, and the corresponding
limit is called hyperbolic-to-parabolic relaxation. In short, unlike
standard relaxation systems, the asymptotic limit of a multiscale
relaxation problem is also characterised by the scaling parameter
$\beta$.        

The numerical resolution of relaxation problems, or multiscale problems
in general, poses a lot of difficulties. Standard explicit schemes work
well in the macroscopic regime of the stiff relaxation parameter
$(\veps\sim 1)$ but in the microscopic regime $(\veps\sim 0)$, these
schemes encounter prohibitively expensive stability constraints. In
addition, merely satisfying the stability constraints does not
guarantee the accuracy of the scheme in the microscopic regime. It is
well-known from literature, e.g.\ \cite{Jin95}, that explicit
time-stepping schemes suffer from a severe loss of accuracy in the
stiff regime $\veps \to 0$. Jin, in \cite{Jin99}, introduced the
notion of the so-called ``Asymptotic Preserving (AP) schemes'' in the
context of kinetic models for transport in diffusive regimes, to 
tackle the multiscale nature of the problem and other
associated difficulties. Formally, the basic idea behind an AP scheme
can be explained in a general setting as follows. Let $\mcal{P}_{\veps}$
denote a singularly perturbed problem with $\veps$, the perturbation
parameter. Suppose that in the limit as $\veps \to 0$, the solution of
$\mcal{P}_{\veps}$ converges to the solution of a well-posed problem
denoted by  $\mcal{P}_{0}$, called the singular limit or the limit
problem. A numerical scheme for $\mcal{P}_{\veps}$, denoted by
$\mcal{P}_{\veps}^{h}$ with $h$ being a discretisation parameter, is
said to be asymptotic preserving if,    
\begin{enumerate}[(i)]
\item as $\veps \to 0$ the numerical scheme $\mcal{P}_{\veps}^{h}$
  converges to a numerical scheme $\mcal{P}_{0}^{h}$, which is a
  consistent discretisation of the limit system $\mcal{P}_{0}$, and 
\item the stability constraints on the discretisation parameter $h$
  are independent of $\veps$.
\end{enumerate}
Mathematically, the passage $\mcal{P}_\veps\to\mcal{P}_0$ can often be
formulated as a singular limit of the governing equations; see e.g.\ 
\cite{KM81} for the treatment of a singular limit in hydrodynamics. In
such problems, the AP methodology turns out to be a natural choice for
the numerical approximation, in the sense that it respects the singular
limit at a discrete level, i.e.\
$\mcal{P}_\veps^h\to\mcal{P}_0^h$. Furthermore, the AP framework
automatically recognises the singular and non-singular regions in
the flow as well as the transient regions where regime shifts take
place. Therefore, using an AP discretisation for relaxation problems
involving multiple scales is an effective method that drastically 
reduces the computational complexity while simultaneously enhancing 
the accuracy; see \cite{Jin12} for a detailed review.

During the past decade and a half, the Implicit-Explicit (IMEX)
Runge-Kutta (RK) schemes for solving stiff systems of differential
equations have gained a lot of attention; see \cite{BR13, CJR97, DP11,
  JL96, JPT98, JPT00, Kla98, LM08, PR05} and the references 
therein. IMEX time-stepping schemes rely on a stiff/non-stiff
splitting wherein they add a small amount of implicitness in
comparison to a fully-implicit scheme. The compromise is usually very 
optimal since the need to invert large dense matrices for
a fully-implicit scheme can be avoided, while also getting past the
restrictive stability conditions of a fully-explicit scheme. Individually,
hyperbolic-to-hyperbolic and hyperbolic-to-parabolic relaxation
problems have been successfully tackled by the application of IMEX-RK
schemes; see \cite{CJR97, Jin95, JPT98, JPT00, Kla98, PR05}. In the
case of multiscale relaxation problems, since the asymptotic limits
are non-unique and dependent on $\beta$, the design and analysis of
IMEX AP schemes is a more complex and demanding task. First, the
chosen discretisation should be AP for different asymptotic limits of
the given relaxation system. Second, the restrictions on the
discretisation parameters imposed by stability constraints may not be
uniform as different limit systems are obtained corresponding to
different values of $\beta$. In \cite{BPR17}, a unified AP IMEX-RK
scheme is proposed and analysed for the hyperbolic and parabolic
relaxation limits of a Jin-Xin-type relaxation system. More recently,
in \cite{ADP20}, a unified AP scheme has been developed in an IMEX
linear multistep framework.    

Another difficulty in the numerical approximation of hyperbolic
balance laws containing source terms is the appearance of steady state
solutions wherein flux gradients are exactly balanced by source
terms. For example, in the case of the Euler equations of
hydrodynamics, an equilibrium of interest is the hydrostatic state in 
which the pressure gradients are balanced by the gravitational
force. The so-called well-balanced schemes are those schemes that can
maintain such steady states exactly at the discrete level. Most of the
practical problems of interest are small perturbations of steady
states and therefore, the challenge for a well-balanced scheme is to
resolve such perturbations up to machine precision. One can find
various approaches to derive well-balanced schemes for hyperbolic
balance laws in literature; see e.g.\ \cite{ABB+04, Bou04, CLS04,
  FMT11, GPC07, Gos00, GL96, KP07, NPP+06, NXS07, XS05, XS06, Xu02}.  

The goal of the present work is to design, analyse, and implement a
unified AP and well-balanced scheme for the Euler equations of
compressible flows with multiscale relaxation. Specifically, we
consider the Euler system with an isentropic equation of state
containing gravitational and frictional source terms. Under this
setting, the pertinent non-dimensional parameters which characterise
the multiscale nature of the Euler system are the Mach number, the
Froude number, and a scaled friction coefficient. After a scaling of
these numbers with $\veps$, the pressure gradient term is
$\mcal{O}(\veps^{2\beta})$ whereas the gravity and friction terms
scale as $\mcal{O}(\veps^{1+\beta})$. In the diffusive regime
$(\beta=1)$, the Euler system relaxes to a porous medium equation for
the density with a Darcy-type pressure law. On the other hand, under
the hyperbolic scaling, i.e.\ when $\beta \in [0,1)$, it relaxes to a
linear advection equation for the density. Additionally, due to the
presence of both gravity and friction terms, the Euler system
admits nontrivial stationary solutions which necessitate the need to
have a well-balanced numerical scheme. 

In recent years, AP and well-balanced schemes for the Euler equations
have been an active area of research. The most well-explored singular
limit of the Euler equations is invariably the incompressible limit in
which the compressible Euler system approaches its incompressible
counterpart \cite{KM81}. Various AP schemes have been proposed to
approximate the incompressible limit; see, e.g.\ \cite{AS20, BFN20,
  DLV17, DT11, NBA+14} and the references therein. Details of
well-balancing of the sources in the context of the low Mach number
limit of the Euler equations, or the low Froude number limit of the
shallow water equations can be found, e.g.\ in \cite{BAL+14, BLY17,
  TPK20}. In the presence of source terms, the solution in the
asymptotic regime is often in a state of balance, and hence
well-balancing is crucial for AP schemes applied to stiff systems of
balance laws. In addition, since most of the practical problems of
relevance are perturbations of some steady states, well-balancing is a
key also for the transient regimes wherein $\veps$ is small yet not an
infinitesimal. In the present work, the AP property in a multiscale
relaxation problem for the Euler equations is achieved by performing a
semi-implicit time discretisation along the lines of \cite{BPR17}. A
reformulation of the resulting time semi-discrete scheme allows us to
recast it as an AP, fully-explicit method in which the density update
contains both convection and diffusion terms. Subsequently, a
space-time fully-discrete scheme is obtained in a finite volume
framework. Well-balancing property for the hydrostatic steady state is
accomplished through a novel approach wherein the hyperbolic
convective fluxes are approximated by a Rusanov-type approximate
Riemann solver combined with an equilibrium reconstruction
\cite{ABB+04, Bou04, NRT14, NRT15}, and the parabolic fluxes, by a
simple central differencing. The fully-discrete scheme is shown to be
stable under a parabolic CFL condition which becomes less and less
severe as $\veps \to 0$ for $\beta \in [0,1)$; see also \cite{BPR17}
for details.

The rest of this paper is organised as follows. In
Section~\ref{sec:mult-relax-limits}, we present an asymptotic analysis
of the scaled Euler system with gravity and friction and highlight the
two distinguished limit equations. In
Section~\ref{sec:time-semi-discrete}, we derive a semi-discrete in 
time and semi-implicit scheme and its AP fully-explicit
reformulation. In Section~\ref{sec:space-time-fully}, a space-time
fully-discrete scheme is obtained using a finite volume framework. In
order to maintain the steady states at the discrete level or in other
words, to achieve the well-balancing property, the interface fluxes are
calculated using a hydrostatic reconstruction technique and the source
terms are appropriately upwinded. As the mass update is responsible to 
maintain the AP property of the scheme, a careful choice of the
central discretisation for the parabolic mass fluxes is made in order
to retain the AP property as well as the balance. An analysis of the
scheme thus obtained is carried out to establish its consistency,
well-balancing and AP properties. In
Section~\ref{sec:numer-case-stud}, we present the results of numerical
case studies carried out, which not only substantiate the theoretical
claims but also demonstrate the robustness of the new scheme. Finally,
we close the paper in Section~\ref{sec:con} with a few concluding
remarks.     

\section{Multiscale Relaxation Limits of the Euler System}
\label{sec:mult-relax-limits}

We consider the following one-dimensional compressible Euler equations
with gravity and friction in dimensionless variables: 
\begin{align}
  \D_t\rho+\D_x(\rho u) &=0, \label{eq:euler_mas}\\
  \D_t(\rho u)+\D_x(\rho u^2)+\frac{1}{\Ma^2}\D_xP(\rho) &= - 
       \left(\frac{1}{\bar{\mu}}\rho u-\frac{1}{\Fr^2}\rho \partial_x
                                                     \phi\right). \label{eq:euler_mom} 
\end{align}
Here the independent variables are the time $t>0$ and space
$x\in\mbb{R}$, and the dependent variables are the density $\rho>0$ and
the velocity $u\in\mbb{R}$. The pressure is assumed to follow the
Darcy's law  $P(\rho) = \rho^\gamma$, where $\gamma \geq 1$ is a 
constant. The function $\phi$ represents the known gravitational
potential.  The dimensionless numbers $\Ma, \, \Fr \,$ and $\bar{\mu}$
represent the Mach number, the Froude number and the scaled friction
coefficient respectively. They are defined as   
\begin{equation}
    \Ma := \frac{u_{\rf}}{c_{\rf}}, \quad \Fr :=
    \frac{u_{\rf}}{\sqrt{g x_{\rf}}}, \quad \bar{\mu} :=
    \frac{u_{\rf}}{\mu x_{\rf}}, 
\end{equation}
where $u_{\rf}, \ c_{\rf}$, and $x_{\rf}$ are a reference fluid
speed, a reference sound speed, and a reference length
respectively. The constant $g$ is the gravitational constant and
$\mu$ represents the friction coefficient. 

Let us introduce an infinitesimal parameter $\veps\in (0,1]$ and scale
the non-dimensional numbers via
\begin{equation}
  \label{eq:MaFrX_scales}
  \Ma^2 \sim \mcal{O}(\veps^{2\beta}),  \quad
  \Fr^2=\bar{\mu} \sim \mcal{O}(\veps^{1+\beta}).
\end{equation}
Upon scaling, the compressible Euler equations
\eqref{eq:euler_mas}-\eqref{eq:euler_mom} read
\begin{align}
  \D_t\rho+\D_x(\rho u) &=0, \label{eq:euler_nd_mas}\\
  \D_t(\rho u)+\D_x(\rho u^2)+\frac{1}{\veps^{2\beta}}\D_xP(\rho) &=
       -\frac{1}{\veps^{1+\beta}}(\rho u-\rho\D_x\phi). \label{eq:euler_nd_mom} 
\end{align}
The system \eqref{eq:euler_nd_mas}-\eqref{eq:euler_nd_mom} relaxes to
different asymptotic limits depending on the values of the scaling
parameter $\beta$. In the following, we suppose that $\rho \to
\rho_{(0)}$ and $u \to u_{(0)}$ as $\veps \to 0$. When $\beta = 1$, as
$\veps \to 0$, the Euler system
\eqref{eq:euler_nd_mas}-\eqref{eq:euler_nd_mom} relaxes to the
equilibrium system
\begin{align}
  \D_t\rho_{(0)}+\D_x\left(\rho_{(0)}u_{(0)}\right)&=0, \label{eq:euler_mas_beta1} \\
  \rho_{(0)}u_{(0)}&=\rho_{(0)}\D_x\phi-\D_xP\left(\rho_{(0)}\right).
  \label{eq:euler_mom_beta1}  
\end{align}
Eliminating the momentum $\rho_{(0)}u_{(0)}$ between
\eqref{eq:euler_mas_beta1} and \eqref{eq:euler_mom_beta1} gives the
following parabolic porous medium equation: 
\begin{equation}
  \D_t \rho_{(0)} + \D_x \left(\rho_{(0)} \partial_x \phi\right) =
  \D_{xx}P\left(\rho_{(0)}\right). 
    \label{eq:pmeqn}
\end{equation}
Similarly, when $\beta \in [0,1),$ the momentum equation
\eqref{eq:euler_nd_mom} relaxes to
$\rho_{(0)}u_{(0)}=\rho_{(0)}\D_x\phi$, and the mass equation
converges to \eqref{eq:euler_mas_beta1} again. Consequently, we obtain 
the hyperbolic transport equation  
\begin{equation}
  \D_t \rho_{(0)} + \D_x(\rho_{(0)} \D_x \phi) = 0.
  \label{eq:transporteqn}
\end{equation}

The asymptotic analysis carried out above for the compressible Euler
equations \eqref{eq:euler_nd_mas}-\eqref{eq:euler_nd_mom} shows that
the mass conservation equation incorporates the limiting momentum
equation to give us two different limit equations, depending on the
values of $\beta$. In other words, when $\veps\to0$, the sole surviving
dependent variable is the density $\rho_{(0)}$, and the velocity
$u_{(0)}$ can be be explicitly obtained once $\rho_{(0)}$ is
known. Based on this observation, we make the following definition of
a reformulated and reduced limit equation which pertains to the
limiting mass conservation equation in both of the cases considered
above.   
\begin{definition}\label{def:ref_asymp_lim}
  The reformulated and reduced limit equation for the scaled
  compressible Euler equations
  \eqref{eq:euler_nd_mas}-\eqref{eq:euler_nd_mom} as $\veps\to0$ is
  defined as   
  \begin{enumerate}[(i)]
  \item the parabolic porous medium equation \eqref{eq:pmeqn} when
    $\beta = 1$; and  
  \item the hyperbolic transport equation \eqref{eq:transporteqn} when
    $\beta \in [0,1)$.
  \end{enumerate}
\end{definition}

\subsection{Hydrostatic Steady States}
\label{sec:steady-states}

Hydrostatic steady states are particular stationary solutions of the
Euler system \eqref{eq:euler_nd_mas}-\eqref{eq:euler_nd_mom} that
satisfy 
\begin{align}
  u &= 0, \label{eq:hydro_vl} \\
  \frac{1}{\veps^{2\beta}}\D_xP(\rho) &=
                                  \frac{1}{\veps^{1+\beta}}\rho\D_x\phi. \label{eq:hydro_dn}   
\end{align}
In other words, the hydrostatic steady states correspond to vanishing
velocities and the balancing of the pressure gradient and the
gravitational force. The solutions to
\eqref{eq:hydro_vl}-\eqref{eq:hydro_dn} are not unique, and they
depend on the exponent $\gamma$ in the pressure law. Setting
$\gamma=1$ in \eqref{eq:hydro_dn}, and solving the resulting equations
for $\rho$ yields the following isothermal equilibrium solution:
\begin{equation}
  u_e = 0, \quad
  \rho_e(x)=C\exp{\left(\veps^{\beta-1}\phi(x)\right)}. 
  \label{eq:hydro_iso}
\end{equation}
When $\gamma>1$, analogously, we obtain the following isentropic
equilibrium:
\begin{equation}
  u_e = 0, \quad
  \rho_e(x)=\left(\frac{\gamma-1}{\gamma}\veps^{\beta-1}\phi(x)+C)\right)^{\frac{1}{\gamma-1}}.   
  \label{eq:hydro_ise}
\end{equation}
Here $C$ denotes a constant of integration. 

A multiscale relaxation framework was proposed in \cite{BPR17}, where
a time semi-discrete solver was introduced which converges to an
explicit RK discretisation of the correct asymptotic limit
independently of the scaling parameter. This was achieved by using an
implicit treatment of the mass flux and friction terms, combined with
an explicit treatment of the momentum flux terms.Motivated by this
approach, the primary goal of the present work is to develop a time
semi-discretisation for the system
\eqref{eq:euler_nd_mas}-\eqref{eq:euler_nd_mom}, in order to get a 
unified AP scheme for the reformulated and reduced limit equation. In
other words, the numerical scheme should yield a consistent
discretisation of the parabolic limit \eqref{eq:pmeqn} when $\beta=1$
and that of the hyperbolic limit \eqref{eq:transporteqn} when
$\beta\in [0,1)$. Since most of the problems of interest involve
perturbations of steady states such as \eqref{eq:hydro_iso} or
\eqref{eq:hydro_ise}, we want the proposed scheme to be well-balanced
for the hydrostatic steady states in addition to being AP. 

\section{Time Semi-discrete Scheme}
\label{sec:time-semi-discrete}

In this section, a time discretisation of the compressible Euler system
\eqref{eq:euler_nd_mas}-\eqref{eq:euler_nd_mom} is
proposed. Subsequently, following the design of the scheme, we prove
that the proposed time-discretisation relaxes to the correct
asymptotic limit independent of the choice of the scaling used. Let
$0=t^0<t^1<\cdots<t^n<\cdots$ be an increasing sequence of times and 
let $f^n$ denote an approximation to the value of a function $f$ at
time $t^n$. Along the lines of \cite{BPR17}, we design the following
time semi-discretisation for 
\eqref{eq:euler_nd_mas}-\eqref{eq:euler_nd_mom} in which only the
momentum terms are implicit:  
\begin{align}
  \rho^{n+1}&=\rho^n-\Dlt \D_x q^{n+1},\label{eq:tsd_mas} \\
  q^{n+1}&=q^n-\Dlt\D_x\left(\frac{{q^n}^2}{\rho^n}\right)
           -\frac{\Dlt}{\veps^{2\beta}}\D_xP(\rho^n)
           -\frac{\Dlt}{\veps^{1+\beta}}\left(q^{n+1}-\rho^n\D_x\phi\right),
           \label{eq:tsd_mom}  
\end{align}
where $q=\rho u$. As the next step, we perform a reformulation of the
above scheme so that the mass update \eqref{eq:tsd_mas} rewrites as a
perturbation of a discretisation of the reduced limit equation for all
values of $\beta$. To this end, we eliminate $q^{n+1}$ between
\eqref{eq:tsd_mas}-\eqref{eq:tsd_mom}, and recast the resulting
update formulae in the following incremental form:  
\begin{align}
\rho^{n+1}&= \rho^n- \frac{\veps^{1+\beta} \Delta t }{\veps^{1+\beta}
            +\Delta t}\partial_x q^n  +  \frac{\veps^{1+\beta} \Delta
            t^2 }{\veps^{1+\beta} +\Delta t} \partial_{xx}
            \left(\frac{{q^n}^2}{\rho^n}\right) \nonumber\\& \quad \quad +
  \frac{\veps^{1-\beta} \Delta t^2 }{\veps^{1+\beta} + \Delta t}
  \partial_{xx}P(\rho^n) -  \frac{ \Delta t^2 }{\veps^{1+\beta}
  +\Delta t} \partial_{x}(\rho^n\partial_x \phi), \label{eq:tsd_rf_mas}\\
  q^{n+1}&=q^n-\frac{\veps^{1+\beta}\Delta t}{\veps^{1 + \beta} + \Delta
           t} \partial_x \left(\frac{{q^n}^2}{\rho^n}\right)-
           \frac{\veps^{1-\beta}\Delta t}{\veps^{1 + \beta} + \Delta
         t} \partial_x P(\rho^n)-\frac{\Delta t}{\veps^{1 + \beta} +
         \Delta t} (q^{n}-\rho^n \partial_x \phi)). \label{eq:tsd_rf_mom}
\end{align}
\begin{remark}
  It can be easily seen that upon a Taylor expansion, the first
  derivative terms in the above update formulae
  \eqref{eq:tsd_rf_mas}-\eqref{eq:tsd_rf_mom} constitute a hyperbolic
  system whose flux function depends explicitly on $\veps$ and
  $\Dlt$. The eigenvalues of its Jacobian matrix converge to the
  eigenvalues of the Euler system
  \eqref{eq:euler_nd_mas}-\eqref{eq:euler_nd_mom} as $\Dlt\to0$ when
  $\veps$ is fixed. On the other hand, for a fixed $\Dlt$, the
  eigenvalues remain bounded in the limit of
  $\veps\to0$. As a consequence, the stiffness in the stability
  condition can be overcome by using the semi-discrete scheme
  \eqref{eq:tsd_rf_mas}-\eqref{eq:tsd_rf_mom}. It has to be 
  noted that the presence of the second order terms in
  \eqref{eq:tsd_rf_mas}-\eqref{eq:tsd_rf_mom} would impose a stricter
  parabolic stability restriction in a space-time
  discretisation. However, it has been shown in \cite{BPR17} that the
  parabolic stability constraint does not degenerate in the stiff limit
  $\veps\to0$.       
\end{remark}
We designate the updates \eqref{eq:tsd_rf_mas}-\eqref{eq:tsd_rf_mom}
as the reformulated time semi-discrete scheme. Note that the mass
update \eqref{eq:tsd_rf_mas} now contains both hyperbolic and
parabolic terms which are essential to get consistency with the
reduced limit equation for all values of $\beta$. We state the AP
property of the scheme as follows. 
\begin{theorem}\label{thm:tsd_scheme_thm}
  The reformulated time semi-discrete scheme
  \eqref{eq:tsd_rf_mas}-\eqref{eq:tsd_rf_mom} is consistent with the
  Euler system \eqref{eq:euler_nd_mas}-\eqref{eq:euler_nd_mom} away
  from vacuum. Furthermore, it is asymptotically consistent with the
  reformulated and reduced limit equation given in
  Definition~\ref{def:ref_asymp_lim}. In other words, as $\veps \to 
  0$, the mass update \eqref{eq:tsd_rf_mas} yields a consistent time
  semi-discretisation of  
  \begin{enumerate}[(i)]
  \item the parabolic porous medium equation \eqref{eq:pmeqn} for
    $\beta = 1$; and 
  \item the hyperbolic transport equation \eqref{eq:transporteqn} for
    $\beta \in [0,1)$.
  \end{enumerate}
\end{theorem}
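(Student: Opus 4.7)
The plan is to treat the two assertions of Theorem~\ref{thm:tsd_scheme_thm} separately: first, standard consistency of the scheme with the Euler system for fixed $\veps$ as $\Dlt\to 0$; second, asymptotic consistency for fixed $\Dlt$ as $\veps\to 0$, where the cases $\beta=1$ and $\beta\in[0,1)$ must be handled individually. Both parts will be verified by inserting a smooth exact solution into the reformulated updates \eqref{eq:tsd_rf_mas}--\eqref{eq:tsd_rf_mom} and passing to the appropriate limit in the rational coefficients.

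For the first claim, I will perform a Taylor expansion around $t=t^n$, divide \eqref{eq:tsd_rf_mas} and \eqref{eq:tsd_rf_mom} by $\Dlt$, and let $\Dlt\to 0$. The second-order spatial derivatives all carry an extra $\Dlt$ factor and therefore drop out, while the coefficients $\tfrac{\veps^{1+\beta}}{\veps^{1+\beta}+\Dlt}$, $\tfrac{\veps^{1-\beta}}{\veps^{1+\beta}+\Dlt}$, $\tfrac{1}{\veps^{1+\beta}+\Dlt}$ tend to $1$, $\veps^{-2\beta}$, $\veps^{-1-\beta}$, respectively. A direct comparison then shows that \eqref{eq:tsd_rf_mas} recovers \eqref{eq:euler_nd_mas} and \eqref{eq:tsd_rf_mom} recovers \eqref{eq:euler_nd_mom}. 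The away-from-vacuum hypothesis enters only to give meaning to the quadratic term $(q^n)^2/\rho^n$.

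For the asymptotic consistency I will freeze $\Dlt>0$ and pass $\veps\to 0$ in the coefficients of \eqref{eq:tsd_rf_mas}. The key observation, which is the entire point of the reformulation, is that none of these coefficients blows up: specifically, $\tfrac{\veps^{1+\beta}\Dlt}{\veps^{1+\beta}+\Dlt}\to 0$, $\tfrac{\Dlt^{2}}{\veps^{1+\beta}+\Dlt}\to \Dlt$, and $\tfrac{\veps^{1-\beta}\Dlt^{2}}{\veps^{1+\beta}+\Dlt}$ converges to $\Dlt$ when $\beta=1$ and to $0$ when $\beta\in[0,1)$. Inserting these limits into \eqref{eq:tsd_rf_mas}, the hyperbolic convective contributions $\D_x q^n$ and $\D_{xx}((q^n)^2/\rho^n)$ vanish, yielding in the case $\beta=1$ the update
\begin{equation*}
\rho^{n+1}=\rho^n+\Dlt\,\D_{xx}P(\rho^n)-\Dlt\,\D_x(\rho^n\D_x\phi),
\end{equation*}
which is the forward-Euler discretisation of the porous medium equation \eqref{eq:pmeqn}; in the case $\beta\in[0,1)$ only the gravity term survives and one obtains
\begin{equation*}
\rho^{n+1}=\rho^n-\Dlt\,\D_x(\rho^n\D_x\phi),
\end{equation*}
i.e.\ forward Euler applied to the transport equation \eqref{eq:transporteqn}.

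The one subtle point is that the $q^n$ appearing on the right-hand side of \eqref{eq:tsd_rf_mas} must itself behave uniformly in $\veps$; otherwise the $\D_x q^n$ term could corrupt the stiff limit even though its prefactor vanishes. I will address this by applying the same coefficient-limit procedure to the momentum update \eqref{eq:tsd_rf_mom}: since $\tfrac{\Dlt}{\veps^{1+\beta}+\Dlt}\to 1$ and $\tfrac{\veps^{1-\beta}\Dlt}{\veps^{1+\beta}+\Dlt}$ tends to $1$ for $\beta=1$ and to $0$ for $\beta\in[0,1)$, the one-step relation forces $q^{n+1}$ to equilibrate to $\rho^n\D_x\phi-\D_xP(\rho^n)$ when $\beta=1$ and to $\rho^n\D_x\phi$ when $\beta\in[0,1)$, matching exactly the limiting momentum relations derived in Section~\ref{sec:mult-relax-limits}. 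A straightforward induction on $n$ then gives the required uniform boundedness of $q^n$. The main ``obstacle'' is therefore essentially bookkeeping: the reformulation was engineered so that every $\veps$-dependent coefficient is bounded and admits an explicit limit, and once these limits are read off term by term, both assertions of the theorem follow.
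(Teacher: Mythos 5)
Your proposal is correct and follows essentially the same route as the paper: consistency with the Euler system via Taylor expansion after dividing by $\Dlt$, and asymptotic consistency by reading off the $\veps\to 0$ limits of the bounded rational coefficients in \eqref{eq:tsd_rf_mas}, which yields exactly the paper's limit updates \eqref{eq:AP_thm_tsd_mas} and \eqref{eq:AP_thm_tsd_mom}. Your additional remark on the uniform-in-$\veps$ behaviour of $q^n$, obtained by passing to the limit in \eqref{eq:tsd_rf_mom} and inducting on $n$ (which tacitly requires well-prepared initial data), is a refinement the paper leaves implicit rather than a different argument.
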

\begin{proof}
  The consistency with the Euler equations
  \eqref{eq:euler_nd_mas}-\eqref{eq:euler_nd_mom} follows by Taylor
  expanding the terms on the right hand side of
  \eqref{eq:tsd_rf_mas}-\eqref{eq:tsd_rf_mom}. 
  
  Setting $\beta=1$ in \eqref{eq:tsd_rf_mas}, and letting $\veps\to0$
  we get
  \begin{equation}
    \label{eq:AP_thm_tsd_mas}
    \rho_{(0)}^{n+1}=\rho_{(0)}^n-\Dlt\D_x\left(\rho_{(0)}^n\D_x\phi\right)
    +\Dlt\D_{xx}P\left(\rho^n_{(0)}\right),
  \end{equation}
  which is a consistent semi-discrete approximation of the porous
  medium equation \eqref{eq:pmeqn}. Similarly, when $\beta \in [0,1)$,
  the limit $\veps \to 0$ yields
   \begin{equation}
    \label{eq:AP_thm_tsd_mom}
    \rho_{(0)}^{n+1}=\rho_{(0)}^n-\Dlt\D_x\left(\rho_{(0)}^n\D_x\phi\right),
  \end{equation}
  which is consistent with the transport equation
  \eqref{eq:transporteqn}. 
\end{proof}

\section{Space-time Fully-discrete Scheme and Well-balancing}
\label{sec:space-time-fully}

This section is devoted to the design and analysis of a fully-discrete
version of the time semi-discrete scheme
\eqref{eq:tsd_rf_mas}-\eqref{eq:tsd_rf_mom}. Towards this end, we use
a finite volume approach to approximate the spatial derivatives. In 
order to maintain discrete steady states, the interface fluxes in the
momentum update are modified using a hydrostatic reconstruction
technique. The source terms are appropriately upwinded to serve the
task at hand; see also \cite{NRT14}. Since the mass update is
reformulated by incorporating the updated momentum to achieve the AP
property, it is essential to preserve the discrete steady state in a
discretisation of the mass conservation as well. We make a prudent
choice of a central discretisation to maintain the balance in the
density update.  

Let us recast the semi-discrete scheme
\eqref{eq:tsd_rf_mas}-\eqref{eq:tsd_rf_mom} in the following compact
form: 
\begin{align}
  \frac{U^{n+1}-U^n}{\Delta t}+ c_1\partial_x F(U^n) = c_2\partial_{x}
  G(U^n, \D_xU^n)+c_1 S(U^n),
  \label{eq:tsd_comp}
\end{align}
where the vector of conserved variables $U$, the hyperbolic flux $F$,
the parabolic flux $G$ and the source term $S$ are defined as
\begin{gather}
  U=\begin{pmatrix} \rho\\ q \end{pmatrix},  \quad
  F(U) = \begin{pmatrix} q \\ \frac{{q}^2}{\rho}
    +\frac{P(\rho)}{\veps^{2\beta}}\end{pmatrix}, \label{eq:tsd_comp_defn1}\\
  G(U,\D_xU) = \begin{pmatrix} \partial_{x} \left(\frac{{q}^2}{\rho}\right)+ \frac{\partial_{x}P(\rho)}{\veps^{2\beta}} - 
    \frac{\rho\partial_x \phi}{\veps^{1+\beta}} \\
    0 \end{pmatrix},\quad
  S(U) = \begin{pmatrix} 0 \\  -\frac{q-\rho \partial_x
      \phi}{\veps^{1+\beta}} \end{pmatrix}, \label{eq:tsd_comp_defn2}
\end{gather}
with the shorthands $c_1 =  \frac{\veps^{1+\beta}}{\veps^{1+\beta}
  +\Delta t}$ and $c_2 =  c_1\Delta t$. 

In order to get a fully-discrete scheme, we use a finite volume
framework. The first step is to divide the computational domain
$[a,b]$ into $N$ cells $C_i=[x_{i-1/2},x_{i+1/2}]$ for
$1\leq i \leq N$. For simplicity, we assume that the cells $C_i$ have
an equal length $\Dlx$. The unknown $U_i^n$ is an approximation to the
average of $U$ in the cell $C_i$ at time $t^n$, i.e.\ 
\begin{equation}
  \label{eq:fv_avg}
  U_i^n=\frac{1}{\Dlx}\int_{x_{i-1/2}}^{x_{i+1/2}}U(t^n,x)dx.
\end{equation}

Integrating \eqref{eq:tsd_comp} over the cell $C_i$ yields the
following finite volume discretisation:
\begin{equation}
  \label{eq:stfd_fv}
  \frac{U_i^{n+1}-U_i^n}{\Dlt} + c_1\frac{\mcal{F}^n_{i+1/2} -
    \mcal{F}^n_{i-1/2}}{\Dlx} = c_2 \frac{\mcal{G}^n_{i+1/2} -
    \mcal{G}^n_{i-1/2}}{\Dlx} + c_1 S^n_i,
\end{equation}
where $\mcal{F}_{i+1/2}$ and $\mcal{G}_{i+1/2}$ are,
respectively, approximations to the fluxes $F$ and $G$ at the interface
$x_{i+1/2}$, and $S_i$ is an approximation to the source term
$S$ in the cell $C_i$. It is well-known from literature that the
usual approach of defining $\mcal{F}_{i+1/2}$ as $\mcal{F}(U_i,
U_{i+1})$, where $\mcal{F}$ is a consistent numerical flux of the
homogenous Euler system, and a pointwise calculation of the source
term as $S_i = S(U_i)$ produces large errors near non-constant steady
states. In other words, such a choice does not lead to a well-balanced
scheme. Hence, it becomes crucial to incorporate changes in the
numerical scheme which helps to overcome this challenge.   

The main idea behind using the so-called hydrostatic reconstruction
method \cite{ABB+04,Bou04} to maintain well-balancing involves
designating $\mcal{F}_{i+1/2}$ as
$\mcal{F}(U^-_{i+1/2},U_{i+1/2}^+)$ instead of
using $\mcal{F}(U_i, U_{i+1})$. Here, $U^\pm_{i+1/2}$ are
appropriate reconstructions of the conserved variable $U$ at the
interface $x_{i+1/2}$, to be made precise later. The method also
involves upwinding of the sources at interfaces wherein the source
term $S_i$ is discretised as $S^-_{i+1/2} + S^+_{i-1/2}$ with the
upwind contributions defined as  
\begin{equation}
  \label{eq:usi_defns}
  S^-_{i+1/2} :=\begin{pmatrix} 0 \\
    \frac{P\left(\rho^-_{i+1/2}\right)-P(\rho_i)}{\Delta
      x} \end{pmatrix}, \quad 
  S^+_{i-1/2} := \begin{pmatrix} 0 \\
    \frac{P(\rho_i)-P\left(\rho^+_{i-1/2}\right)}{\Delta
      x} \end{pmatrix}. 
\end{equation}

We will show that concentrating the source terms at the interfaces in this
manner preserves the stationary solutions, provided the interface
values $U^\pm_{i+1/2}$ are defined in such a way so as to take 
into account the source terms. The parabolic fluxes $\mcal{G}_{i+1/2}$
are defined as $\mcal{G}(U_i,U_{i+1})$ using a consistent
numerical flux function $\mcal{G}$. Lastly, since the presence of the
expression $\frac{-q} {\veps^{1+\beta} + \Delta t}$ in the source term
$S(U^n)$, cf.\ \eqref{eq:tsd_comp_defn2}, is a result of rearranging
the implicit momentum equation \eqref{eq:tsd_mom} in order to write it
as an explicit scheme, we will not include it in the reconstruction
step. Binding together all the strategies discussed thus far,
the final scheme takes the form   
\begin{align}
  \label{eq:stfd_scheme_gen}
  \begin{split}
    \frac{U^{n+1}_i - U^n_i}{\Delta t}  + c_1
    \frac{\mcal{F}\left(U^{n,-}_{i+1/2},U_{i+1/2}^{n,+}\right)-
      \mcal{F}\left(U^{n,-}_{i-1/2},U_{i-1/2}^{n,+}\right)}{\Delta
      x} &= c_2 \frac{\mcal{G}(U_i^n,U_{i+1}^n) -
      \mcal{G}(U_{i-1}^n,U_i^n)}{\Delta x} \\&\quad + c_1 \left(
      S^{n,-}_{i+1/2} + S^{n,+}_{i-1/2} \right) +
    c_1\begin{pmatrix} 0  \\ -q_i^n\end{pmatrix}. 
\end{split}
\end{align}

Next, we draw our attention towards well-balancing for hydrostatic
equilibrium solutions. To this end, let us consider the pair
$(U_i^n,\phi_i)$, where $\phi_i$ is some consistent approximation of
the potential $\phi$ at the cell centres $x_i$. Following \cite{Bou04}, we
define a discrete steady state as follows.
\begin{definition} \label{defn:disc_steady_state}
  A sequence $(U_i,\phi_i)_{i\in\mbb{Z}}$ is said to be a discrete
  hydrostatic steady state of the Euler system if
  \begin{equation}
    \label{eq:disc_steady_state}
    q_i=0, \quad
    \mcal{D}\left(\rho_i,\rho_{i+1},\phi_i,\phi_{i+1}\right)=0, \
    \mbox{for all} \ i\in\mbb{Z}, 
  \end{equation}
  where the finite difference operator $\mcal{D}$ gives a consistent
  discretisation of the balance \eqref{eq:hydro_dn}. The numerical
  scheme \eqref{eq:stfd_scheme_gen} is said to well-balanced if
  $(U_i^{n+1},\phi_i) _{i\in\mbb{Z}}$ is a discrete steady state
  whenever $(U_i^n,\phi_i) _{i\in\mbb{Z}}$ is a discrete steady state.  
\end{definition}

For explicit finite volume schemes, the main ingredients for
well-balancing are the hydrostatic reconstruction and the upwinding of
the sources at the interfaces; see e.g.\ \cite{NRT14, NRT15} for an
application of these strategies to the Euler system with gravity and
friction. If $(U_i^n,\phi_i)$ is a discrete steady state, the
hydrostatic reconstruction technique ensures that all the velocity
terms vanish in both the numerical flux function and the source
term. Then the upwinding of the sources at the interfaces makes sure 
that the remaining terms correspond to the second identity in
\eqref{eq:disc_steady_state}. As a result, the updated solution
$(U_i^{n+1},\phi_i)$ boils down to a discrete steady
state. Maintaining the hydrostatic balance in the current problem is
much more complex and complicated in comparison to standard explicit
time-stepping schemes. The occurrence of the parabolic flux terms and
the term $\partial_x (\rho \D_x \phi)$ in the mass equation, cf.\
\eqref{eq:tsd_rf_mas}, arising from the reformulation necessitates the
need to choose an appropriate discretisation for these terms in order
to preserve the well-balancing property also in the mass equation. In
the present work we use the hydrostatic reconstruction and the
upwinding of the sources to achieve a balance between the hyperbolic
flux terms and the source terms in the momentum equation. Once this
balance is achieved, we warily devise a discretisation of the
parabolic terms in the modified mass update to enforce and maintain
the same balance.     

\subsection{Reconstruction of the Interface Values}
\label{sec:rec_interface}

In this subsection, we define the reconstructed values $\rho_{i+1/2}^\pm$
and $u_{i+1/2}^\pm$ of the primitive variables. We omit the
superscript $n$ for convenience since the scheme is now fully explicit
and there is no confusion. In order to define the interface values, we
use the following stationary system:   
\begin{align}
  \begin{split}
    \partial_x(\rho u) &= 0,\\
    \partial_x\left(\rho u^2 +\frac{1}{\veps^{2\beta}}
      P(\rho)\right) &= -\frac{1}{\veps^{1+\beta}}(\rho u-\rho
    \partial_x \phi).
\label{eq:eulerstat} 
\end{split}
\end{align}
We also define a function $\psi$ via
\begin{equation}
  \psi(\rho) := e(\rho) + \frac{P(\rho)}{\rho} = \frac{\gamma}{\gamma
    -1} \rho^{\gamma - 1},\ \text{for } \gamma > 1,
\end{equation}
where $e$ denotes the internal energy function given by
$e^\prime(\rho) := \frac{P(\rho)}{\rho^2}$ for the isentropic pressure
law. In terms of the function $\psi$, the stationary system
\eqref{eq:eulerstat} can be rewritten as  
\begin{align}
    \begin{split}
        \partial_x (\rho u) &= 0,\\
        \partial_x \left( \frac{u^2}{2} +
          \frac{1}{\veps^{2\beta}}\psi(\rho) \right) &=
        -\frac{1}{\veps^{1+\beta}} (u- \partial_x \phi),  
        \label{eq:eulerstatpsi}
    \end{split}
\end{align}
after dividing the second equation in \eqref{eq:eulerstat} by
$\rho$. Now integrating the equations in \eqref{eq:eulerstatpsi} over
the half-cell $[x_i, x_{i+1/2}]$, and assuming that the velocity is
constant, we get  
\begin{align}
    \begin{split}
        u^-_{i+1/2} &= u_i,\\
        \frac{1}{\veps^{2\beta}}\psi(\rho^-_{i+1/2}) -
        \frac{1}{\veps^{2\beta}}\psi(\rho_i) &=
        \frac{1}{\veps^{1+\beta}} (\phi_{i+1/2}-\phi_i). 
    \end{split}
\end{align}
Taking $\phi_{i+1/2} = \min(\phi_i, \phi_{i+1})$, the final
reconstructed interface values take the form    
\begin{align}
    \begin{split}
       \veps^{1-\beta} \psi(\rho_{i+1/2}^-) &= \left[\veps^{1-\beta}
       \psi(\rho_{i})  +(\min(\phi_i, \phi_{i+1})-\phi_i)\right]_+,\\ 
       \veps^{1-\beta} \psi(\rho_{i+1/2}^+) &= \left[\veps^{1-\beta}
       \psi(\rho_{i+1})  +(\min(\phi_i, \phi_{i+1}) - \phi_{i+1})\right]_+.
    \label{eq:ereconstruct}
    \end{split}
\end{align}
Here the truncations $[X]_+:=\max(0,X)$ are present to ensure the
positivity of the reconstructed density. For the isentropic gas law
with $\gamma > 1$, the function $\psi$ is continuous and strictly
increasing, and thus is invertible in $[0, \infty)$. Hence, we can find
the interface values $\rho_{i+1/2}^\pm$ by inverting the
relations in \eqref{eq:ereconstruct}. The reconstruction
\eqref{eq:ereconstruct} is referred to as the E-reconstruction
\cite{NRT14}. A  similar analysis can also be performed using
(\ref{eq:eulerstat}) to get what is known as the P-reconstruction 
\begin{align}
    \begin{split}
       \veps^{1-\beta} P(\rho_{i+1/2}^-) &= \left[\veps^{1-\beta}
         P(\rho_{i}) +\bar{\rho}_{i+1/2}(\min(\phi_i,
         \phi_{i+1})-\phi_i)\right]_+,\\ 
       \veps^{1-\beta} P(\rho_{i+1/2}^+) &= \left[\veps^{1-\beta}
         P(\rho_{i+1})  +\bar{\rho}_{i+1/2}(\min(\phi_i, \phi_{i+1}) -
         \phi_{i+1})\right]_+, 
       \label{eq:reconstruct}
    \end{split}
\end{align}
where we fix $\bar{\rho}_{i+1/2} = \frac{1}{2}(\rho_i +
\rho_{i+1})$. Note that unlike $\psi$, the function $P$ is an
invertible function for $\gamma = 1$ as well.  

We assert that computing the hyperbolic fluxes using the above
interpolated states and concentrating the source term at the
interfaces will ensure the well-balancing of the momentum
equation. Before we attempt to prove this claim, we need to fix an
appropriate discretisation for the mass equation to ensure overall
well-balancing for the resulting scheme.  

\subsection{Well-Balancing for the Mass Conservation Equation} 
\label{sec:wb_mass}

In order to present the basic ideas behind the balance, we consider
again the following modified mass equation of the time semi-discrete
scheme which has both hyperbolic and parabolic flux terms:
\begin{align}
  \label{eq:tsd_mas_reconsider}
  \begin{split}
    \rho^{n+1}&= \rho^n- \frac{\veps^{1+\beta} \Dlt
    }{\veps^{1+\beta}+\Dlt}\D_x q^n  + \frac{\veps^{1+\beta}
      \Dlt^2}{\veps^{1+\beta} +\Dlt}
    \D_{xx}\left(\frac{{q^n}^2}{\rho^n}\right) \\& \quad \quad
    +\frac{\veps^{1-\beta} \Dlt^2 }{\veps^{1+\beta} + \Dlt}
    \D_{xx}P(\rho^n) -  \frac{ \Dlt^2 }{\veps^{1+\beta} +\Dlt}
    \D_{x}(\rho^n\D_x \phi).   
  \end{split}
\end{align}

Since a hydrostatic steady state demands the vanishing of velocity at
all times, we notice that the term $\partial_x q^n$ disappears when we
use the equilibrium reconstruction combined with a consistent
numerical flux for the hyperbolic terms. Analogously, the second order
term $\partial_{xx} (\frac{{q^n}^2}{\rho^n})$ will also be zero when we
use a consistent discretisation. Hence, we now only need to preserve
the balance between the parabolic term $\partial_{xx}P(\rho)$ and the
hyperbolic term $\partial_x(\rho \partial_x \phi)$ in the mass
equation in order to maintain the stationarity of $\rho$. 

This can be accomplished by noting the following discrete form of the
equilibrium solution that was used earlier in the P-reconstruction: 
\begin{equation}
  \veps^{1-\beta}P(\rho_{i+1})-\bar{\rho}_{i+1/2}\phi_{i+1}= 
  \veps^{1-\beta}P(\rho_i)-\bar{\rho}_{i+1/2}\phi_{i}. 
\end{equation}
Therefore we believe that using a central discretisation for the
parabolic flux and an averaged upwind flux of the form  
\begin{equation*}
  \frac{\bar{\rho}^n_{i+1/2}(\phi_{i+1}-\phi_{i})-\bar{\rho}^n_{i-1/2}(\phi_{i}-\phi_{i-1})}{\Dlx^2}   
\end{equation*}
for the term $\D_x(\rho \partial_x \phi)$ will ensure a balance between the
two terms for a hydrostatic solution.  
\begin{remark}
  Note that, using the E-reconstruction to construct the balance can
  lead to the loss of the AP property for an isentropic Euler
  system. This is because it can lead to a wrong diffusion coefficient
  for the parabolic limit equation; see \cite{NRT15} for more
  details. Moreover, the P-reconstruction technique can be employed
  for $\gamma = 1$ as well. Hence in all further mathematical and
  numerical analysis presented, we will only make use of the
  P-reconstruction technique.    
\end{remark}

Using the hydrostatic reconstruction for the hyperbolic fluxes
and the numerical source term, and discretising the mass equation in
the manner described above, the final scheme takes the form 
\begin{align}
\rho^{n+1}_i&=\rho^n_i-c_1\frac{\mcal{F}^\rho (U_{i+1/2}^{n,-},U_{i+1/2}^{n,+})-\mcal{F}^\rho(U_{i-1/2}^{n,-},U_{i-1/2}^{n,+})}{\Dlx} + c_2 \frac{\left(\frac{{q_{i+1}^n}^2}{\rho_{i+1}^n}\right)-\left(\frac{{2{q_{i}^n}^2}}{\rho_{i}^n}\right)+\left(\frac{{q_{i-1}^n}^2}{\rho_{i-1}^n}\right)}{\Dlx ^2}\nonumber\\&\quad+ c_2\frac{P(\rho_{i+1}^n)-2P(\rho_{i}^n)+P(\rho_{i-1}^n)}{\veps^{2\beta}\Dlx^2} -  c_2 \frac{\bar{\rho}^n_{i+1/2}(\phi_{i+1}-\phi_{i})-\bar{\rho}^n_{i-1/2}(\phi_{i}-\phi_{i-1})}{\veps^{1+\beta}\Dlx ^2}, \label{eq:stfd_mas}\\
q^{n+1}_i&=q^n_i- c_1 \frac{\mcal{F}^q (U_{i+1/2}^{n,-},U_{i+1/2}^{n,+})-\mcal{F}^q(U_{i-1/2}^{n,-},U_{i-1/2}^{n,+})}{\Dlx} -c_1 q^{n}_i  + c_1 \frac{P(\rho_{i+1/2}^{n,-})-P(\rho_{i-1/2}^{n,+})}{\veps^{2\beta}\Dlx},\label{eq:stfd_mom}
\end{align}
where $\mcal{F} = \left(\mcal{F}^\rho, \mcal{F}^q\right)^T$ is a
consistent numerical flux of the homogeneous Euler system. In what
follows, we establish the consistency, well-balancing and AP
properties of the space-time fully-discrete scheme
\eqref{eq:stfd_mas}-\eqref{eq:stfd_mom}. 

\begin{theorem} \label{thm:stfd_scheme_thm}
  The fully-discrete scheme \eqref{eq:stfd_mas}-\eqref{eq:stfd_mom}
  \begin{enumerate}[(i)]
  \item is a consistent discretisation of the Euler system
    \eqref{eq:euler_nd_mas}-\eqref{eq:euler_nd_mom} away from vacuum, 
  \item is well-balanced for the hydrostatic steady state in the sense
    of Definition~\ref{defn:disc_steady_state},
  \item is AP for the reformulated and reduced limit equation as $\veps
    \to 0$, independent of $\beta$. 
\end{enumerate}
\end{theorem}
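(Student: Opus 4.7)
The plan is to prove the three assertions in turn, using the structure already set up in Sections~\ref{sec:time-semi-discrete}--\ref{sec:space-time-fully}. For consistency (i), I would Taylor-expand the numerical fluxes $\mcal{F}$ and $\mcal{G}$ about a smooth $U(t^n,x_i)$, noting that the hydrostatic reconstructions \eqref{eq:reconstruct} reduce to $\rho_{i\pm1/2}^\pm = \rho_i + \mcal{O}(\Dlx)$ whenever $\phi$ is smooth, so the interface fluxes recover $F(U_i)$ up to first-order errors. The central difference approximations to $\partial_{xx}(q^2/\rho)$, $\partial_{xx}P(\rho)$ and the two-sided average for $\partial_x(\rho\partial_x\phi)$ are standard second-order discretisations of the corresponding continuous operators, and the upwinded source \eqref{eq:usi_defns} gives $S^-_{i+1/2}+S^+_{i-1/2} = -\D_xP(\rho_i)/\veps^{2\beta} + \mcal{O}(\Dlx)$. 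Combining these with Theorem~\ref{thm:tsd_scheme_thm} yields consistency with \eqref{eq:euler_nd_mas}--\eqref{eq:euler_nd_mom} away from vacuum.

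For well-balancing (ii), I would first establish the key algebraic fact that the P-reconstruction \eqref{eq:reconstruct} produces equal interface values at discrete equilibrium. Assume $(U_i^n,\phi_i)$ satisfies $q_i^n=0$ together with the discrete balance $\veps^{1-\beta}[P(\rho_{i+1})-P(\rho_i)]=\bar{\rho}_{i+1/2}(\phi_{i+1}-\phi_i)$. Splitting into the cases $\phi_i\lessgtr\phi_{i+1}$ and substituting into \eqref{eq:reconstruct}, a short calculation shows $P(\rho_{i+1/2}^-)=P(\rho_{i+1/2}^+)$, hence $\rho_{i+1/2}^-=\rho_{i+1/2}^+$ by monotonicity of $P$; moreover $u_{i+1/2}^\pm=0$. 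Plugging these into the momentum update \eqref{eq:stfd_mom}, the hyperbolic flux $\mcal{F}^q$ reduces to $P(\rho_{i\pm1/2})/\veps^{2\beta}$ and cancels exactly against the upwind pressure contribution in the source, while the $-c_1 q_i^n$ term vanishes; hence $q_i^{n+1}=0$. For the mass update \eqref{eq:stfd_mas}, the $\mcal{F}^\rho$ and $(q^n)^2/\rho^n$ terms drop out by $u=0$, and summing the discrete balance relation at interfaces $i\pm1/2$ gives exactly
\begin{equation*}
\frac{P(\rho_{i+1}^n)-2P(\rho_i^n)+P(\rho_{i-1}^n)}{\veps^{2\beta}\Dlx^2}=\frac{\bar{\rho}_{i+1/2}^n(\phi_{i+1}-\phi_i)-\bar{\rho}_{i-1/2}^n(\phi_i-\phi_{i-1})}{\veps^{1+\beta}\Dlx^2},
\end{equation*}
so the two $c_2$-terms cancel and $\rho_i^{n+1}=\rho_i^n$.

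For the AP property (iii), I would track the $\veps$-dependence of the coefficients $c_1=\veps^{1+\beta}/(\veps^{1+\beta}+\Dlt)$ and $c_2=c_1\Dlt$ when combined with the singular factors in \eqref{eq:stfd_mas}--\eqref{eq:stfd_mom}. As $\veps\to0$ with $\Dlt$ fixed, $c_1\to 0$, which kills the hyperbolic $\partial_x q$ flux and the $\partial_{xx}(q^2/\rho)$ term. The ratio $c_2/\veps^{1+\beta}=\Dlt/(\veps^{1+\beta}+\Dlt)\to1$, giving a full contribution from the gravity term $\partial_x(\rho\partial_x\phi)$; the ratio $c_2/\veps^{2\beta}=\veps^{1-\beta}\Dlt/(\veps^{1+\beta}+\Dlt)$ tends to $\Dlt$ when $\beta=1$ and to $0$ when $\beta\in[0,1)$. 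The resulting discrete mass equations are precisely the space-time discretisations of \eqref{eq:AP_thm_tsd_mas} and \eqref{eq:AP_thm_tsd_mom} respectively, confirming consistency with the porous medium limit \eqref{eq:pmeqn} for $\beta=1$ and the transport limit \eqref{eq:transporteqn} for $\beta\in[0,1)$. One must also verify that the momentum update degenerates to the algebraic relaxation $q^{n+1}\to\rho^n\partial_x\phi$ that closes the asymptotic expansion in the same limit, which follows from $c_1\to0$ applied to \eqref{eq:stfd_mom}.

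The main obstacle is the bookkeeping in part (ii): although the cancellations look natural, one needs the P-reconstruction (not E-reconstruction) so that the discrete equilibrium used in the reconstruction is the same relation that is summed at two adjacent interfaces to cancel the $c_2$ contributions in the mass equation. Verifying that the choice $\bar{\rho}_{i+1/2}=(\rho_i+\rho_{i+1})/2$ and the particular averaged upwind discretisation of $\partial_x(\rho\partial_x\phi)$ are compatible with the pressure reconstruction is the delicate point; once this is in place, parts (i) and (iii) are comparatively direct expansions.
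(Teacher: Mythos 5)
Your overall strategy coincides with the paper's for all three parts, and your treatments of (ii) and (iii) are essentially the published proof: in (ii) you correctly identify that the P-reconstruction combined with the discrete balance $\veps^{1-\beta}[P(\rho_{i+1})-P(\rho_i)]=\bar{\rho}_{i+1/2}(\phi_{i+1}-\phi_i)$ forces $U^{n,-}_{i+1/2}=U^{n,+}_{i+1/2}$ (your case split on $\phi_i\lessgtr\phi_{i+1}$ is actually more explicit than the paper, which simply asserts this), and the telescoping of the balance at the two interfaces is exactly how the paper cancels the $c_2$-terms in the mass update; in (iii) your bookkeeping of $c_1\to0$ and of the ratios $c_2/\veps^{1+\beta}$, $c_2/\veps^{2\beta}$ reproduces the paper's limit computation (modulo the paper's own ambiguity about whether the update is written for $U^{n+1}$ or for $(U^{n+1}-U^n)/\Dlt$, which affects a factor of $\Dlt$ but not the conclusion).

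There is, however, a genuine error in your part (i). You claim that the upwinded source satisfies $S^-_{i+1/2}+S^+_{i-1/2}=-\D_xP(\rho_i)/\veps^{2\beta}+\mcal{O}(\Dlx)$. This is false, and if it were true the scheme would \emph{not} be consistent with the momentum equation: the pressure gradient already enters through the hyperbolic flux $\mcal{F}^q$, so the interface source must supply the gravitational term, not another pressure gradient. The correct computation, which is the one the paper performs, substitutes the P-reconstruction identities into the telescoped sum $\bigl(P(\rho^-_{i+1/2})-P(\rho^+_{i-1/2})\bigr)/(\veps^{2\beta}\Dlx)$: the pressure differences cancel by construction and leave $\veps^{\beta-1}\bigl[\bar{\rho}_{i+1/2}(\min(\phi_i,\phi_{i+1})-\phi_i)-\bar{\rho}_{i-1/2}(\min(\phi_{i-1},\phi_i)-\phi_i)\bigr]/(\veps^{2\beta}\Dlx)$, which converges to $\rho\,\D_x\phi/\veps^{1+\beta}$ as $U_{i-1},U_i,U_{i+1}\to U$ and $\Dlx\to0$. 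Together with the separately treated $-q_i^n/\veps^{1+\beta}$ contribution this recovers the full source $S(U)$ in the sense of Perthame--Simeoni, and only then does consistency with \eqref{eq:euler_nd_mas}--\eqref{eq:euler_nd_mom} follow. The fix is a short computation entirely within your own framework, but as written the step fails.
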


\begin{proof}
  It is straightforward to show that the central discretisations of
  the parabolic fluxes and the term $\D_x(\rho^n\D_x \phi)$ are
  consistent. Hence, in order to prove (i), we need to show only the
  consistency of the hyperbolic numerical fluxes and the source
  term. To this end, we follow the approach of \cite{PS03}. First, we
  show that 
  \begin{equation}
    \label{eq:flux_cons}
    \lim_{U_i^n,U_{i+1}^n \to U, \, \Dlx \to 0} \mcal{F}(U_{i+1/2}^{n,-},U_{i+1/2}^{n,+}) = F(U).
  \end{equation}
  As in \cite{NRT14}, we use a Taylor expansion to see that 
  \begin{equation}
    U_{i+1/2}^{n,-} = U_i^n + \mcal{O}(\Dlx), \quad U_{i+1/2}^{n,+} =
    U_{i+1}^n + \mcal{O}(\Dlx). 
  \end{equation}
  Hence, we have that
  \begin{equation}
    \mcal{F}(U_{i+1/2}^{n,-},U_{i+1/2}^{n,+}) = \mcal{F}(U^n_i, U^n_{i+1}) + \mcal{O}(\Dlx).
  \end{equation}
  Therefore, from the consistency of the numerical flux function
  $\mcal{F}$, in the limit, the consistency of
  $\mcal{F}(U_{i+1/2}^{n,-},U_{i+1/2}^{n,+})$ also follows.  

  Now we have to prove the consistency of the source term
  discretisation, i.e.\ the upwinding of the source term at the
  interfaces. This notion of consistency is defined as per \cite{PS03}
  where we need to show that  
  \begin{equation}
    \lim_{U^n_{i},U^n_{i+1}\to U, \, \Dlx\to
      0}\left\{\left(S_{i+1/2}^{n,-}+S_{i+1/2}^{n,+}\right)-
      \frac{q^n_i}{\veps^{1+\beta}}\right\}=S(U). 
  \end{equation}
  From the definition of the upwind contributions of the source term,
  cf.\ \eqref{eq:usi_defns}, we have that
  \begin{equation}
    \label{eq:usi_terms_sum}
    S_{i+1/2}^{n,-}+S_{i+1/2}^{n,+}= \frac{1}{{\veps^{2\beta}}\Dlx}
    \begin{pmatrix}
      0 \\
      P(\rho_{i+1/2}^{n,-})-P(\rho^n_{i})+P(\rho^n_{i+1})-P(\rho_{i+1/2}^{n,+})\end{pmatrix}. 
  \end{equation}
  For the P-reconstruction, we can expand the above relation using a
  taylor expansion, omitting the positivity preserving truncations (we
  assume the density to be away from vacuum) to yield
  \begin{equation}
    \begin{aligned}
      {\veps^{1-\beta}}P(\rho_{i+1/2}^{n,-})-{\veps^{1-\beta}}P(\rho_{i+1/2}^{n,+})
      &=
      {\veps^{1-\beta}}P(\rho^n_{i})+\bar{\rho}^n_{i+1/2}\left(\frac{\D_x\phi_{i}-|\D_x\phi_{i}|}{2}\right)\Dlx \\ 
      &-{\veps^{1+\beta}}P(\rho^n_{i+1})-\bar{\rho}^n_{i+1/2}\left(-\frac{\D_x\phi_{i+1}+|\D_x\phi_{i+1}|}{2}\right)\Dlx+O(\Dlx^2).
    \end{aligned}
  \end{equation}
  Therefore, in the limit $U^n_{i},U^n_{i+1}\to U, \, \Dlx\to 0$, we have 
  \begin{equation}
    \frac{P(\rho_{i+1/2}^{n,-})-P(\rho_{i+1/2}^{n,+})}{{\veps^{2\beta}}\Dlx}
    = \frac{\rho\D_x\phi}{\veps^{1+\beta}}, 
  \end{equation}
  and the discretisation of the source terms is thus consistent.

To prove (ii), let us assume that $(U_i^n,\phi_i)$ be a discrete
hydrostatic solution. We take this solution in the following form: 
\begin{align}
    \begin{split}
        u^n_i &= 0, \\
        \veps^{1-\beta}P(\rho^n_{i+1})-\bar{\rho}^n_{i+1/2}\phi^n_{i+1}
        &= \veps^{1-\beta}P(\rho_i^n)-\bar{\rho}^n_{i+1/2}\phi_{i}^n. 
    \end{split}
    \label{eq:discrete_stat}
\end{align}
Note that the second equation in the above system defines the finite
difference operator $\mcal{D}$ introduced in
Definition~\ref{defn:disc_steady_state}. Using the expressions
\eqref{eq:reconstruct} for reconstructed states, it is evident that
for a discrete stationary solution with zero velocity
\eqref{eq:discrete_stat}, we have that $U^{n,-}_{i+1/2} =
U^{n,+}_{i+1/2}= U^n_{i+1/2},$ for all $i$. Therefore by the
consistency of the numerical flux we have that   
\begin{align}
  \begin{split}
    \mcal{F}(U^{n,-}_{i+1/2}, U^{n,+}_{i+1/2}) -
    \mcal{F}(U^{n,-}_{i-1/2}, U^{n,+}_{i-1/2}) + S^{n,-}_{i+1/2} +
    S^{n,+}_{i-1/2} &= F(U^n_{i+1/2}) - F(U^n_{i-1/2}) +
    S^n_{i+1/2}+ S^n_{i-1/2}\\ 
    &= 0.
  \end{split}
\end{align}
Thus the hyperbolic flux term $\mcal{F}$ and the source term balance
each other for the stationary solution. Now for the mass equation, the
discretisation of the parabolic term $c_2 \partial_{xx}\left(\frac{q^2}{\rho}
\right)$ vanishes due to the consistency of the numerical fluxes because of the
fact that $q_i^n=0$ for all $i$. The discrete form of the hydrostatic
solution then ensures that the central discretisation of the parabolic term
$c_2\partial_{xx}P(\rho)$ and the potential term
$\partial_{x}(\rho\D_x\phi)$ balance each other in the hydrostatic
case, and therefore the scheme preserves the well-balancing property.  

Next, we prove (iii), the AP property which follows from the modified
mass equation   
\begin{align}
  \label{eq:stfd_mas1}
    \begin{split}
\rho^{n+1}_i&=\rho^n_i-c_1 \frac{\mcal{F}^\rho (U_{i+1/2}^{n,-},U_{i+1/2}^{n,+})-\mcal{F}^\rho(U_{i-1/2}^{n,-},U_{i-1/2}^{n,+})}{\Dlx} + c_1 \Dlt \frac{\left(\frac{{q_{i+1}^n}^2}{\rho_{i+1}^n}\right)-\left(\frac{{2{q_{i}^n}^2}}{\rho_{i}^n}\right)+\left(\frac{{q_{i-1}^n}^2}{\rho_{i-1}^n}\right)}{\Dlx ^2} 
\\[9pt] &+\frac{\veps^{1-\beta} \Dlt^2 }{\veps^{1+\beta} + \Dlt} \frac{P(\rho_{i+1}^n)-2P(\rho_{i}^n)+P(\rho_{i-1}^n)}{\Dlx^2} -  \frac{ \Dlt^2 }{\veps^{1+\beta} +\Dlt} \frac{\bar{\rho}^n_{i+1/2}( \phi_{i+1}^n-\phi_{i}^n)-\bar{\rho}^n_{i-1/2}( \phi_{i}^n-\phi_{i-1}^n)}{\Dlx ^2}.
    \end{split}
\end{align}
We set $\beta = 1$ in \eqref{eq:stfd_mas1} and take the limit $\veps
\to 0$. Noting that $c_1 \to 0$ we get  
\begin{align}
    \rho^{n+1}_i&=\rho^n_i + \Dlt \frac{P(\rho_{i+1}^n)-2P(\rho_{i}^n)+P(\rho_{i-1}^n)}{\Dlx^2} -  \Dlt \frac{\bar{\rho}^n_{i+1/2}( \phi_{i+1}^n-\phi_{i}^n)-\bar{\rho}^n_{i-1/2}( \phi_{i}^n-\phi_{i-1}^n)}{\Dlx ^2},
\end{align}
which is an explicit discretisation of the porous medium equation. In
an analogous manner, for $\beta \in [0,1),$ we obtain
\begin{align}
    \rho^{n+1}_i&=\rho^n_i -  \Dlt \frac{\bar{\rho}^n_{i+1/2}( \phi_{i+1}^n-\phi_{i}^n)-\bar{\rho}^n_{i-1/2}( \phi_{i}^n-\phi_{i-1}^n)}{\Dlx ^2}
\end{align}
which is a consistent discretisation of the transport equation.

Thus, the fully discrete scheme
\eqref{eq:stfd_mas}-\eqref{eq:stfd_mom} is well-balanced that relaxes
to a consistent discretisation of the asymptotic limit independent of
$\beta$.   
\end{proof}

\section{Numerical Case Studies}
\label{sec:numer-case-stud}

In this section, we test the proposed scheme in order to study its
unified AP and well-balancing properties. We compare the scheme with
both non well-balanced and non AP schemes, and verify that our scheme
performs better than them in the stiff as well as the non-stiff regimes. A
Rusanov-type approximate Riemann solver was used for the hyperbolic
numerical flux function $\mcal{F}$. A CFL condition of the form 
\begin{equation}
  \label{eq:CFL_cond}
  \Dlt=\lambda_\CFL 
  \min\left(\frac{\Dlx^2}{\veps^{1-\beta}},\frac{\Dlx}{|\partial_x \phi|}
  \right) 
\end{equation}
is used to compute the time step. The exact value of $\lambda_\CFL$
will be specified in each problem. 

\subsection{Unified Asymptotic Preserving Property}
\label{sec:Sod}
The goal of this test problem is to demonstrate the ability of the new
scheme to compute the flow characteristics for a wide range of
$\veps$. We consider two extreme cases, namely $\veps=1$ and
$\veps=0.001$ where the latter is to showcase the AP property in the
limit $\veps\to0$. In order to understand the multiscale behaviour of
the solver, we consider a simple Riemann problem, and therein we use
the Sod initial data under a gravitational field with potential
$\phi(x) = x$ as given in \cite{VC19}. We use extrapolation boundary
conditions and the computational domain is set to be $[0,1]$. The
initial data read  
\begin{equation}
  \label{eq:SodIC}
  (\rho, u) = 
  \begin{cases}
    (1,0), & \text{if}\ x < 0.5, \\
    (0.125,0), & \text{if}\ x > 0.5.
  \end{cases}
\end{equation}
The simulation is run until a final time of $T = 0.2$. Results are
presented in Figures~\ref{fig:sodeps1} and \ref{fig:sodeps10-3} for
the non-stiff regime ($\veps = 1$) and the stiff regime ($\veps =
0.001$) respectively. To test the convergence of the scheme, for
$\veps = 1$, we compare the numerical solution obtained on a coarse
mesh ($N=100$) with that on a fine mesh ($N=1000$) for
$\lambda_\CFL=0.45$. It can be seen that the solution computed on a
coarse mesh is in good agreement with that on a fine mesh. The test
results contain a shock moving to the right followed by an expansion,
which shows the  efficacy of the scheme in resolving the fully
compressible flow features.     
\begin{figure}[htbp]
\centering
\begin{subfigure}{.4\textwidth}
  \centering
  \includegraphics[width=\linewidth]{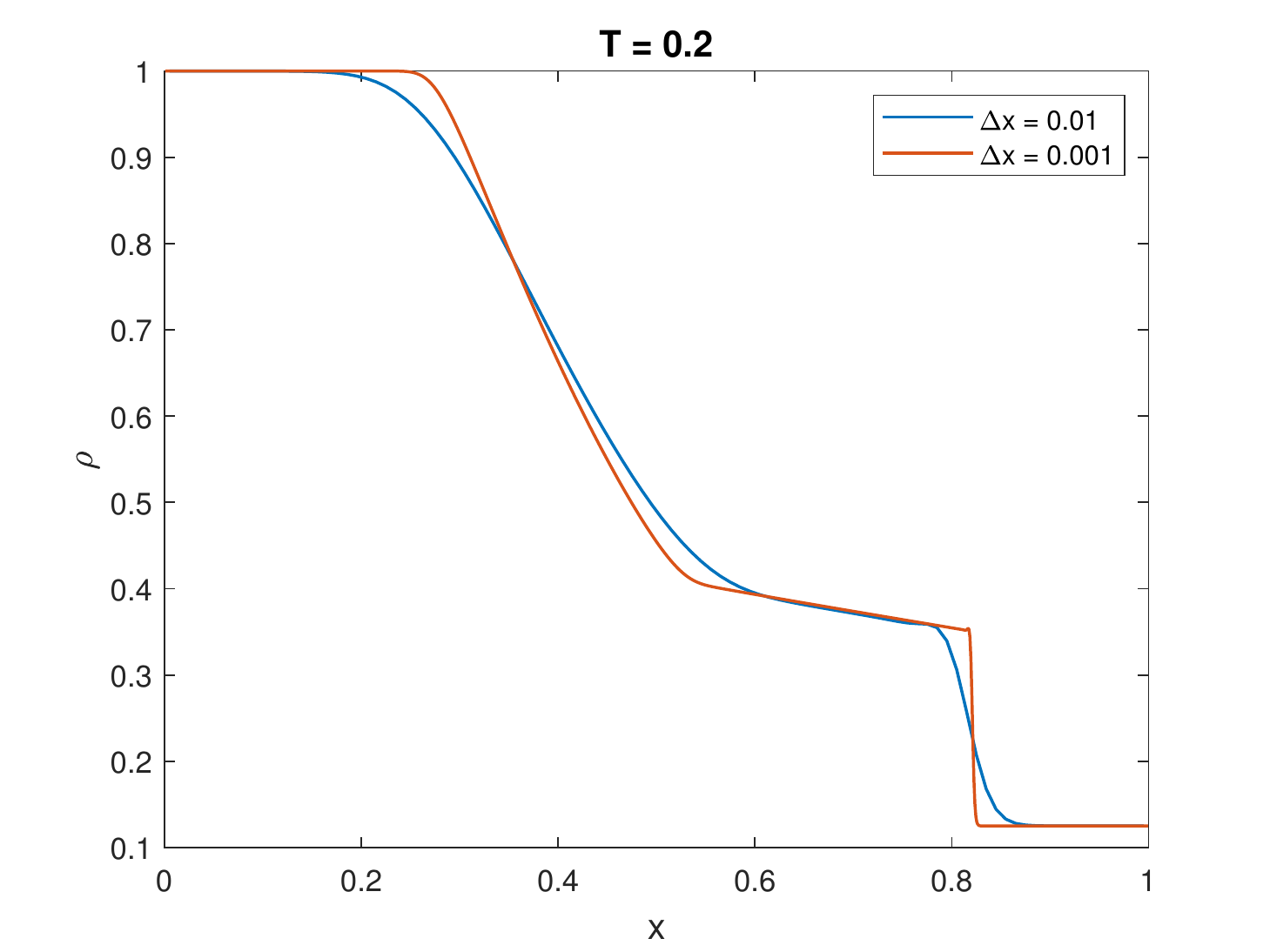}
  \caption{}
\end{subfigure}%
\begin{subfigure}{.4\textwidth}
  \centering
  \includegraphics[width=\linewidth]{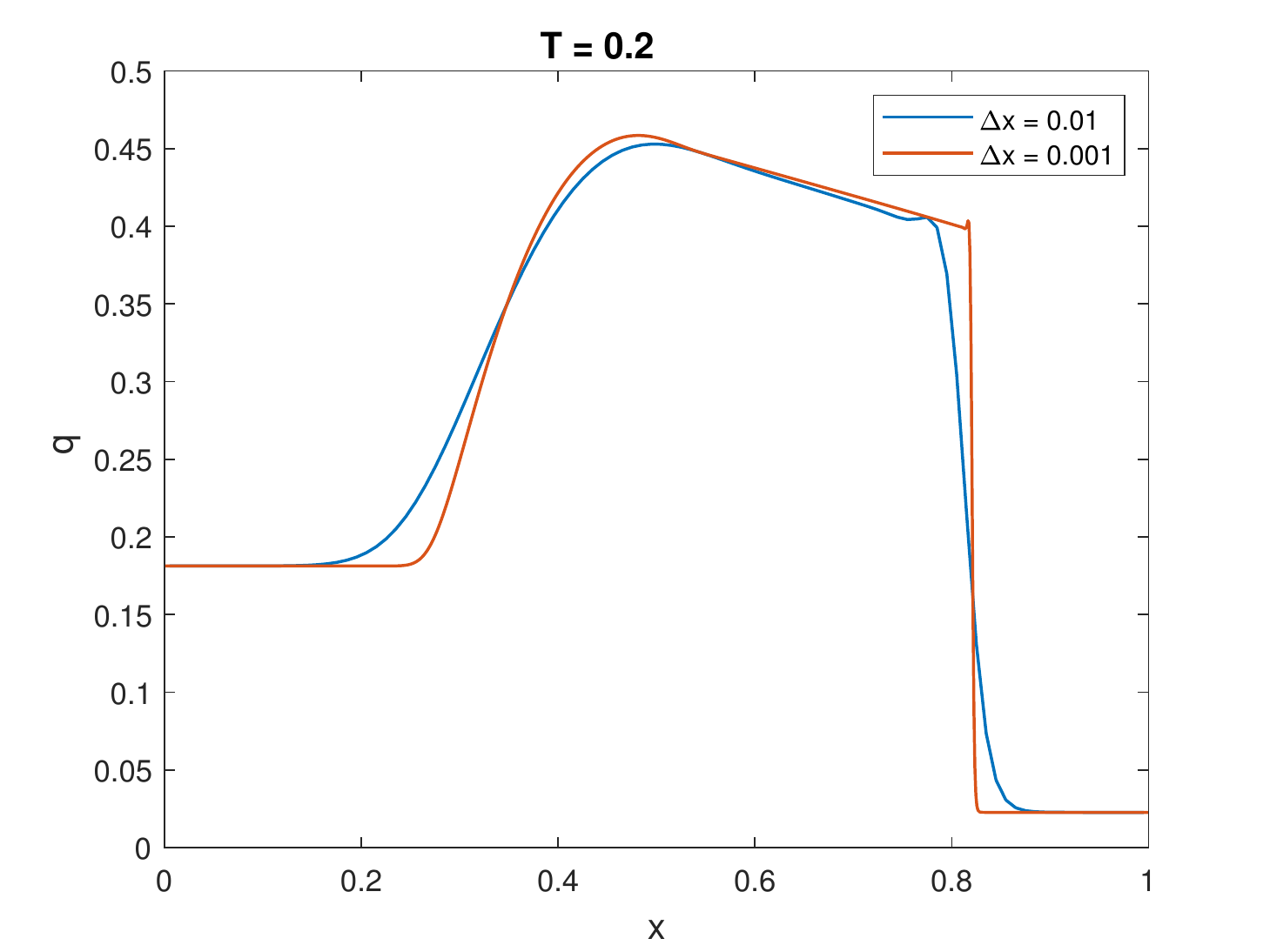}
  \caption{}
\end{subfigure}
\caption{1D Sod problem: solution profiles of (A) $\rho$ and (B) $q$
  at $T=0.2$ in the non-stiff regime for $\veps=1$.} 
\label{fig:sodeps1}
\end{figure}
\begin{figure}
\centering
\begin{subfigure}{.4\textwidth}
  \centering
  \includegraphics[width=\linewidth]{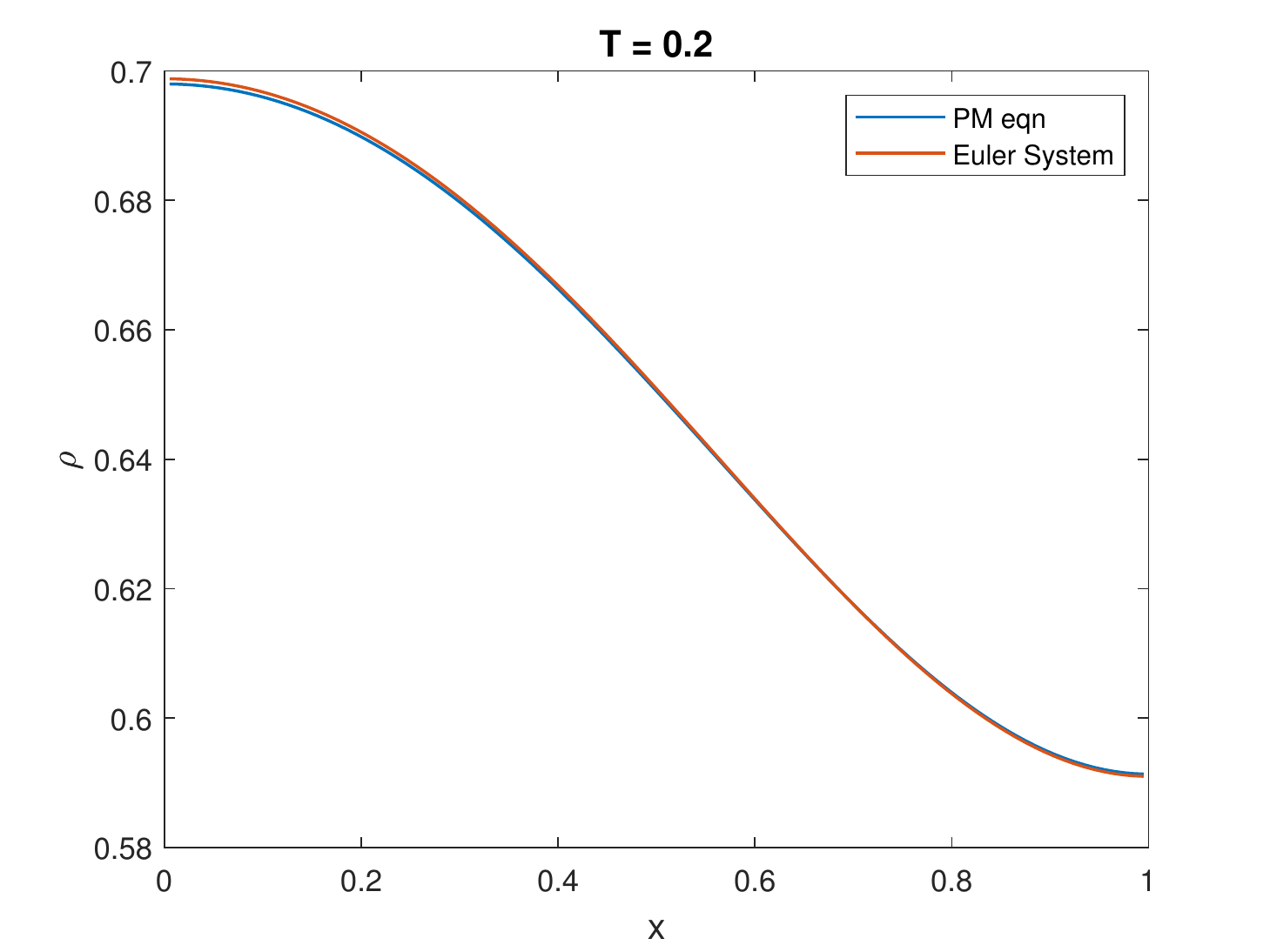}
  \caption{}
\end{subfigure}%
\begin{subfigure}{.4\textwidth}
  \centering
  \includegraphics[width=\linewidth]{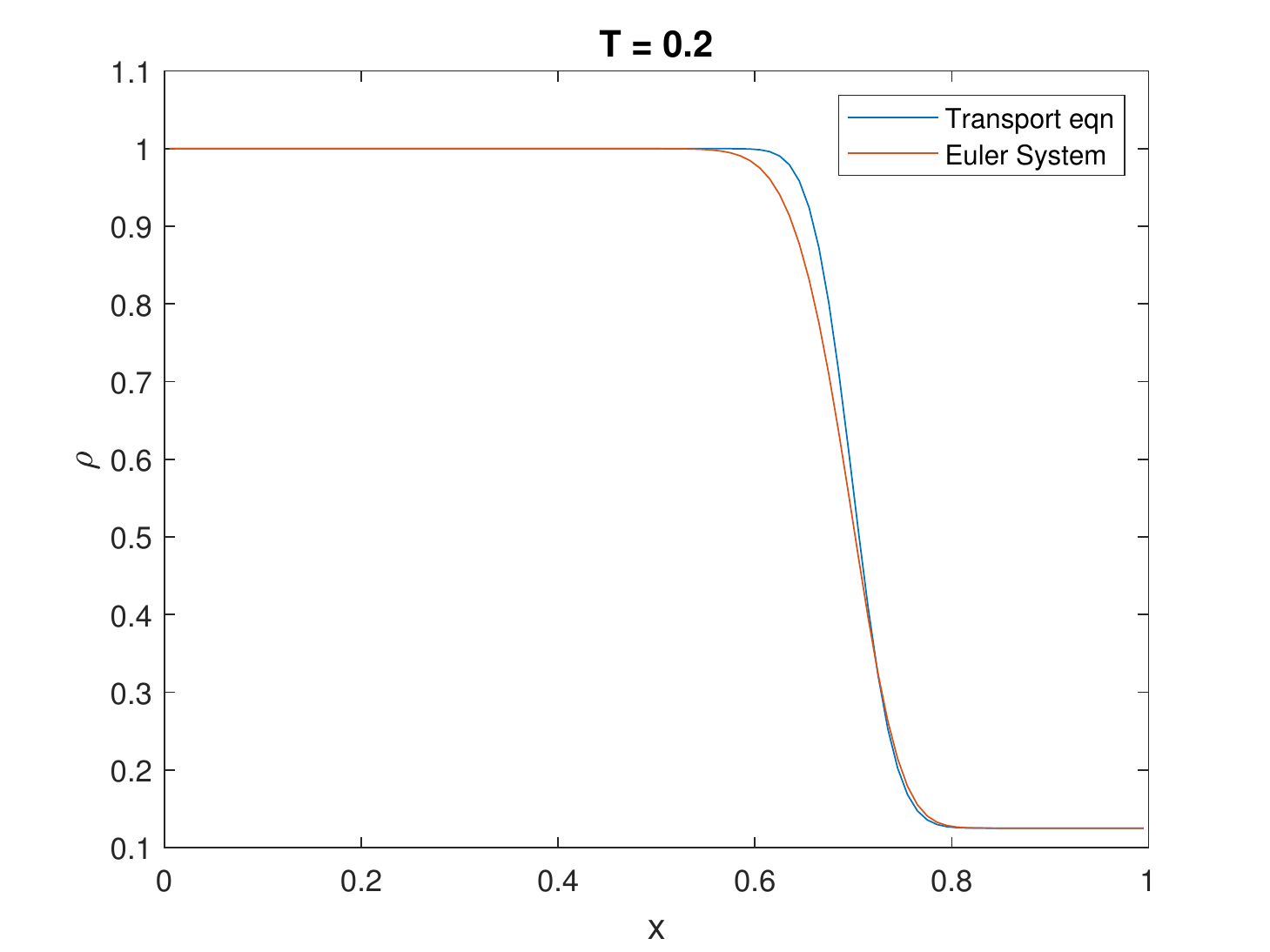}
  \caption{}
\end{subfigure}
\caption{1D Sod problem: solution profiles of $\rho$ at $T=0.2$ for
  $\veps=0.001$ in the (A) parabolic and (B) hyperbolic relaxation
  regimes.}  
\label{fig:sodeps10-3}
\end{figure}

In the stiff regime, we consider both the hyperbolic and parabolic
relaxations. When $\veps = 0.001$ by setting $\beta = 1$, the
numerical solution obtained on a mesh with $100$ cells with
$\lambda_\CFL=0.45$, is compared with that of a standard first order
scheme for the porous medium equation \eqref{eq:pmeqn} to demonstrate
the parabolic relaxation. The resulting solution is in perfect agreement
with that of the parabolic equation showing the correct asymptotic
behaviour of the scheme. Similarly, to show the hyperbolic relaxation
for $\beta \in [0,1)$, the scheme is tested for $\beta = 0.1$ and
$\lambda_\CFL=0.45$. The result shows good agreement with a first
order upwind scheme applied to the transport equation
\eqref{eq:transporteqn}. 

\subsection{Well-balancing Property}
\label{sec:wb}

To numerically validate the well-balancing property of the scheme, we use
initial data in both isothermal and isentropic hydrostatic
equilibrium, taken from \cite{VC19}. We also add a small perturbation
to these equilibria to study their evolution via the scheme. Finally,
we compute the solution for a large time to show the convergence of
the numerical solution to the steady state, and to compare our results
with that of a non well-balanced solver. The CFL number
$\lambda_\CFL$ was taken to be 0.45 in every case.   

\subsubsection{Isothermal Hydrostatic Solution}
\label{sec:isotherm}

We solve the system which is initially in isothermal hydrostatic
equilibrium for $\beta = 1$ corresponding to the following
configuration: 
\begin{equation}
  \label{eq:isotherm}
  \rho(0,x) \equiv \rho_e(x) = \exp(\phi(x)), \quad u(0,x) \equiv
  u_e(x) = 0. 
\end{equation}
The exact solution $(\rho_e,u_e)$, cf.\ \eqref{eq:hydro_iso}, is
interpolated onto the grid, and for different values of $\veps$, the
$L^1$ errors are calculated for three different gravitational
potentials $\phi(x) = x, \frac{x^2}{2}, \sin(2 \pi x)$ for grids with
$100$ and $1000$ cells upto a final time $T = 2$. It can be seen from
Table \ref{Tab:isotherm_err} that the scheme exhibits good precision in
approximating the exact hydrostatic solution for the different
potentials in both the stiff and non-stiff regimes. Thus we conclude
that the scheme maintains the well-balancing property for isothermal
hydrostatic solutions.

\begin{table}[h]
\begin{subtable}{.51\linewidth}
\small
\begin{tabular}{cccc}
\hline
\multicolumn{4}{|c|}{$\veps = 1.0$}                                                                                                                    \\ \hline
\multicolumn{1}{|c|}{$\phi$}                          & \multicolumn{1}{c|}{$N$}    & \multicolumn{1}{c|}{Error in $\rho$} & \multicolumn{1}{c|}{Error in $q$} \\ \hline
\multicolumn{1}{|c|}{\multirow{2}{*}{$x$}}            & \multicolumn{1}{c|}{100}  & \multicolumn{1}{c|}{2.6815E-06}      & \multicolumn{1}{c|}{4.1644E-06}   \\ \cline{2-4} 
\multicolumn{1}{|c|}{}                                & \multicolumn{1}{c|}{1000} & \multicolumn{1}{c|}{2.7018E-08}      & \multicolumn{1}{c|}{4.1655E-08}   \\ \hline
\multicolumn{1}{|c|}{\multirow{2}{*}{$x^2/2$}}        & \multicolumn{1}{c|}{100}  & \multicolumn{1}{c|}{9.9540E-07}      & \multicolumn{1}{c|}{8.6076E-07}   \\ \cline{2-4} 
\multicolumn{1}{|c|}{}                                & \multicolumn{1}{c|}{1000} & \multicolumn{1}{c|}{9.7002E-09}      & \multicolumn{1}{c|}{8.4871E-09}   \\ \hline
\multicolumn{1}{|c|}{\multirow{2}{*}{$\sin(2 \pi x)$}} & \multicolumn{1}{c|}{100}  & \multicolumn{1}{c|}{2.1751E-04}      & \multicolumn{1}{c|}{4.3318E-06}   \\ \cline{2-4} 
\multicolumn{1}{|c|}{}                                & \multicolumn{1}{c|}{1000} & \multicolumn{1}{c|}{2.0620E-06}      & \multicolumn{1}{c|}{6.2102E-08}   \\ \hline
\end{tabular}
\end{subtable}
\begin{subtable}{.48\linewidth}
\small
\begin{tabular}{llll}
\hline
\multicolumn{4}{|c|}{$\veps = 0.1$}                                                                                                                                   \\ \hline
\multicolumn{1}{|c|}{$\phi$}                          & \multicolumn{1}{c|}{$N$}    & \multicolumn{1}{c|}{Error in $\rho$} & \multicolumn{1}{c|}{Error in $q$} \\ \hline
\multicolumn{1}{|c|}{\multirow{2}{*}{$x$}}            & \multicolumn{1}{c|}{100}  & \multicolumn{1}{c|}{1.4173E-06}      & \multicolumn{1}{c|}{1.1110E-05}   \\ \cline{2-4} 
\multicolumn{1}{|c|}{}                                & \multicolumn{1}{c|}{1000} & \multicolumn{1}{c|}{1.4070E-08}      & \multicolumn{1}{c|}{1.0878E-07}   \\ \hline
\multicolumn{1}{|c|}{\multirow{2}{*}{$x^2/2$}}        & \multicolumn{1}{c|}{100}  & \multicolumn{1}{c|}{8.9859E-07}      & \multicolumn{1}{c|}{2.1290E-06}   \\ \cline{2-4} 
\multicolumn{1}{|c|}{}                                & \multicolumn{1}{c|}{1000} & \multicolumn{1}{c|}{8.7053E-09}      & \multicolumn{1}{c|}{2.0610E-08}   \\ \hline
\multicolumn{1}{|c|}{\multirow{2}{*}{$\sin(2 \pi x)$}} & \multicolumn{1}{c|}{100}  & \multicolumn{1}{c|}{2.2498E-04}      & \multicolumn{1}{c|}{1.4161E-05}   \\ \cline{2-4} 
\multicolumn{1}{|c|}{}                                & \multicolumn{1}{c|}{1000} & \multicolumn{1}{c|}{2.1303E-06}      & \multicolumn{1}{c|}{1.4130E-08}   \\ \hline
\end{tabular}
\end{subtable}%
\vspace{5mm}

\begin{subtable}{.51\linewidth}
\small
\begin{tabular}{llll}
\hline
\multicolumn{4}{|c|}{$\veps = 0.01$}                                                                                                                  \\ \hline
\multicolumn{1}{|c|}{$\phi$}                          & \multicolumn{1}{c|}{$N$}    & \multicolumn{1}{c|}{Error in $\rho$} & \multicolumn{1}{c|}{Error in $q$} \\ \hline
\multicolumn{1}{|c|}{\multirow{2}{*}{$x$}}            & \multicolumn{1}{c|}{100}  & \multicolumn{1}{c|}{1.1902E-06}      & \multicolumn{1}{c|}{1.3049E-05}   \\ \cline{2-4} 
\multicolumn{1}{|c|}{}                                & \multicolumn{1}{c|}{1000} & \multicolumn{1}{c|}{1.1544E-08}      & \multicolumn{1}{c|}{1.2975E-07}   \\ \hline
\multicolumn{1}{|c|}{\multirow{2}{*}{$x^2/2$}}        & \multicolumn{1}{c|}{100}  & \multicolumn{1}{c|}{8.8290E-07}      & \multicolumn{1}{c|}{2.4434E-06}   \\ \cline{2-4} 
\multicolumn{1}{|c|}{}                                & \multicolumn{1}{c|}{1000} & \multicolumn{1}{c|}{8.5468E-09}      & \multicolumn{1}{c|}{2.4053E-08}   \\ \hline
\multicolumn{1}{|c|}{\multirow{2}{*}{$\sin(2 \pi x)$}} & \multicolumn{1}{c|}{100}  & \multicolumn{1}{c|}{2.2599E-04}      & \multicolumn{1}{c|}{1.6097E-05}   \\ \cline{2-4} 
\multicolumn{1}{|c|}{}                                & \multicolumn{1}{c|}{1000} & \multicolumn{1}{c|}{2.1313E-06}      & \multicolumn{1}{c|}{1.6134E-08}   \\ \hline
\end{tabular}
\end{subtable}
\begin{subtable}{.48\linewidth}
\small
\begin{tabular}{llll}
\hline
\multicolumn{4}{|c|}{$\veps = 0.001$}                                                                                                                   \\ \hline
\multicolumn{1}{|c|}{$\phi$}                          & \multicolumn{1}{c|}{$N$}    & \multicolumn{1}{c|}{Error in $\rho$} & \multicolumn{1}{c|}{Error in $q$} \\ \hline
\multicolumn{1}{|c|}{\multirow{2}{*}{$x$}}            & \multicolumn{1}{c|}{100}  & \multicolumn{1}{c|}{1.1705E-06}      & \multicolumn{1}{c|}{1.3170E-05}   \\ \cline{2-4} 
\multicolumn{1}{|c|}{}                                & \multicolumn{1}{c|}{1000} & \multicolumn{1}{c|}{1.1390E-08}      & \multicolumn{1}{c|}{1.3172E-07}   \\ \hline
\multicolumn{1}{|c|}{\multirow{2}{*}{$x^2/2$}}        & \multicolumn{1}{c|}{100}  & \multicolumn{1}{c|}{8.8199E-07}      & \multicolumn{1}{c|}{2.4605E-06}   \\ \cline{2-4} 
\multicolumn{1}{|c|}{}                                & \multicolumn{1}{c|}{1000} & \multicolumn{1}{c|}{8.5282E-09}      & \multicolumn{1}{c|}{2.4367E-08}   \\ \hline
\multicolumn{1}{|c|}{\multirow{2}{*}{$\sin(2 \pi x)$}} & \multicolumn{1}{c|}{100}  & \multicolumn{1}{c|}{2.2606E-04}      & \multicolumn{1}{c|}{1.6197E-05}   \\ \cline{2-4} 
\multicolumn{1}{|c|}{}                                & \multicolumn{1}{c|}{1000} & \multicolumn{1}{c|}{2.1314E-06}      & \multicolumn{1}{c|}{1.6336E-08}   \\ \hline
\end{tabular}
\end{subtable}
\caption{Errors in the density $\rho$ and the momentum $q$ for
  different potentials and for a range of $\veps$ using
  different mesh sizes in the isothermal test case.} 
\label{Tab:isotherm_err}
\end{table}

Next, we want to study the efficacy of the scheme in simulating the
evolution of small perturbations added to the initial equilibrium
solution. To this end, we compare our solver with a non well-balanced
scheme which also makes use of the unified AP time discretisation but
without the equilibrium spatial reconstruction. The potential in this
case is taken to be $\phi(x) = x$ and the initial density is given by 
\begin{equation}
  \label{eq:isothermpert}
  \rho_0(x) = \exp(\phi(x)) + \zeta \exp(-100(x-0.5)^2).
\end{equation}
The computational domain is $[0,1]$ and the boundary conditions are
imposed by interpolating the equilibrium solution onto the ghost
cells. The results are presented for two different amplitudes of
perturbation $\zeta = 10^{-3}, 10^{-5}$. In the non-stiff regime, as
expected, the well-balanced scheme resolves the solution well when
compared to the non well-balanced scheme, cf.\
Figure~\ref{fig:it_pert_eps1}. However, we notice from Figure
\ref{fig:it_pert_eps10-3} that the non well-balanced scheme also
produces results similar to that of the well-balanced scheme in the
stiff regime. This behaviour can be explained by the fact that the
parabolic porous medium equation \eqref{eq:pmeqn} and the hyperbolic
Euler system \eqref{eq:euler_nd_mas}-\eqref{eq:euler_nd_mom} share the
same stationary state, and the AP property of the non well-balanced
scheme then ensures that it also relaxes to a reasonable approximation of
the same steady state as $\veps \to 0$. 
\begin{figure}
\centering
\begin{subfigure}{.32\textwidth}
  \centering
  \includegraphics[width=\linewidth]{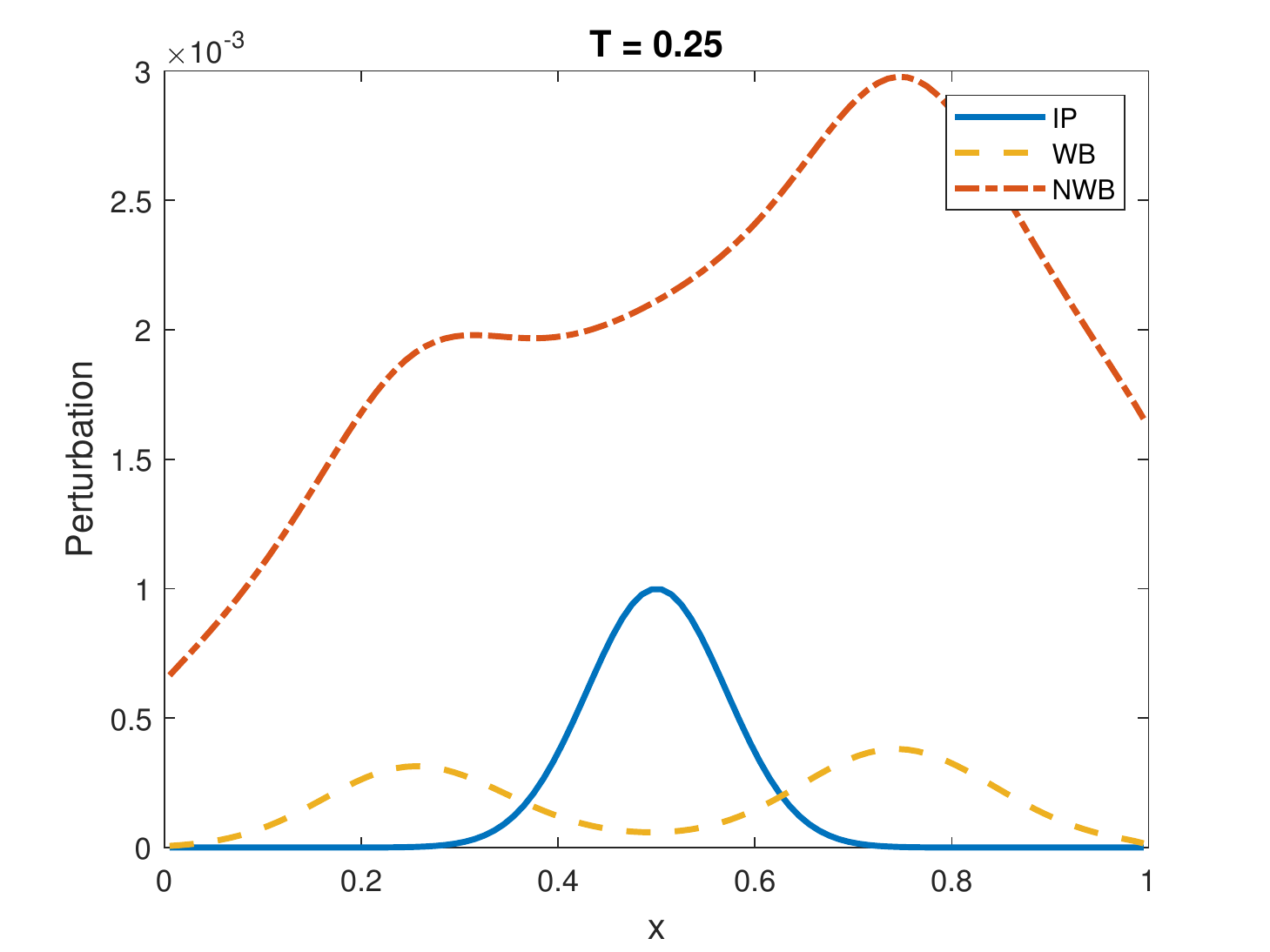}
  \caption{}
\end{subfigure}%
\begin{subfigure}{.32\textwidth}
  \centering
  \includegraphics[width=\linewidth]{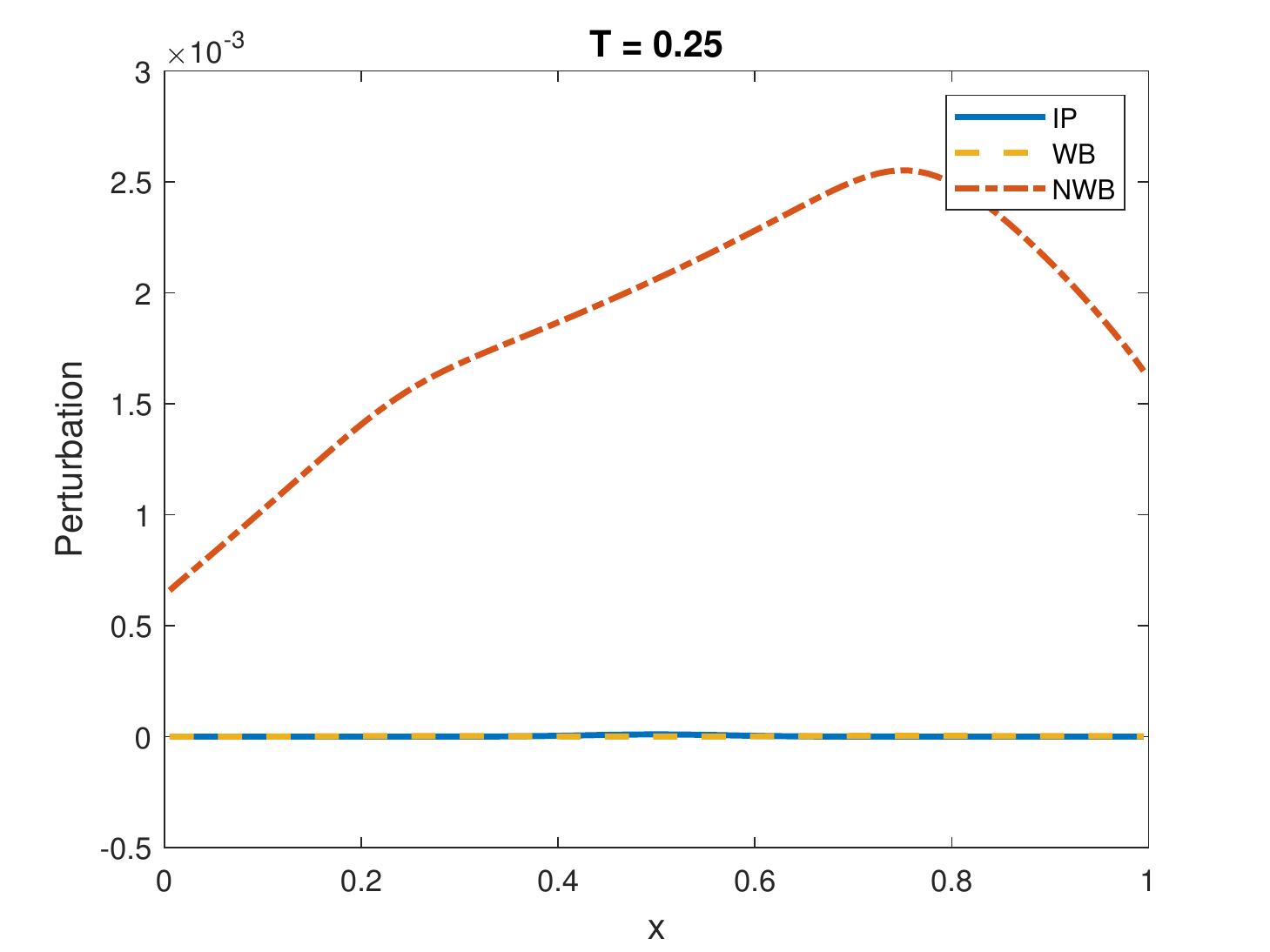}
  \caption{}
\end{subfigure}
\begin{subfigure}{.32\textwidth}
  \centering
  \includegraphics[width=\linewidth]{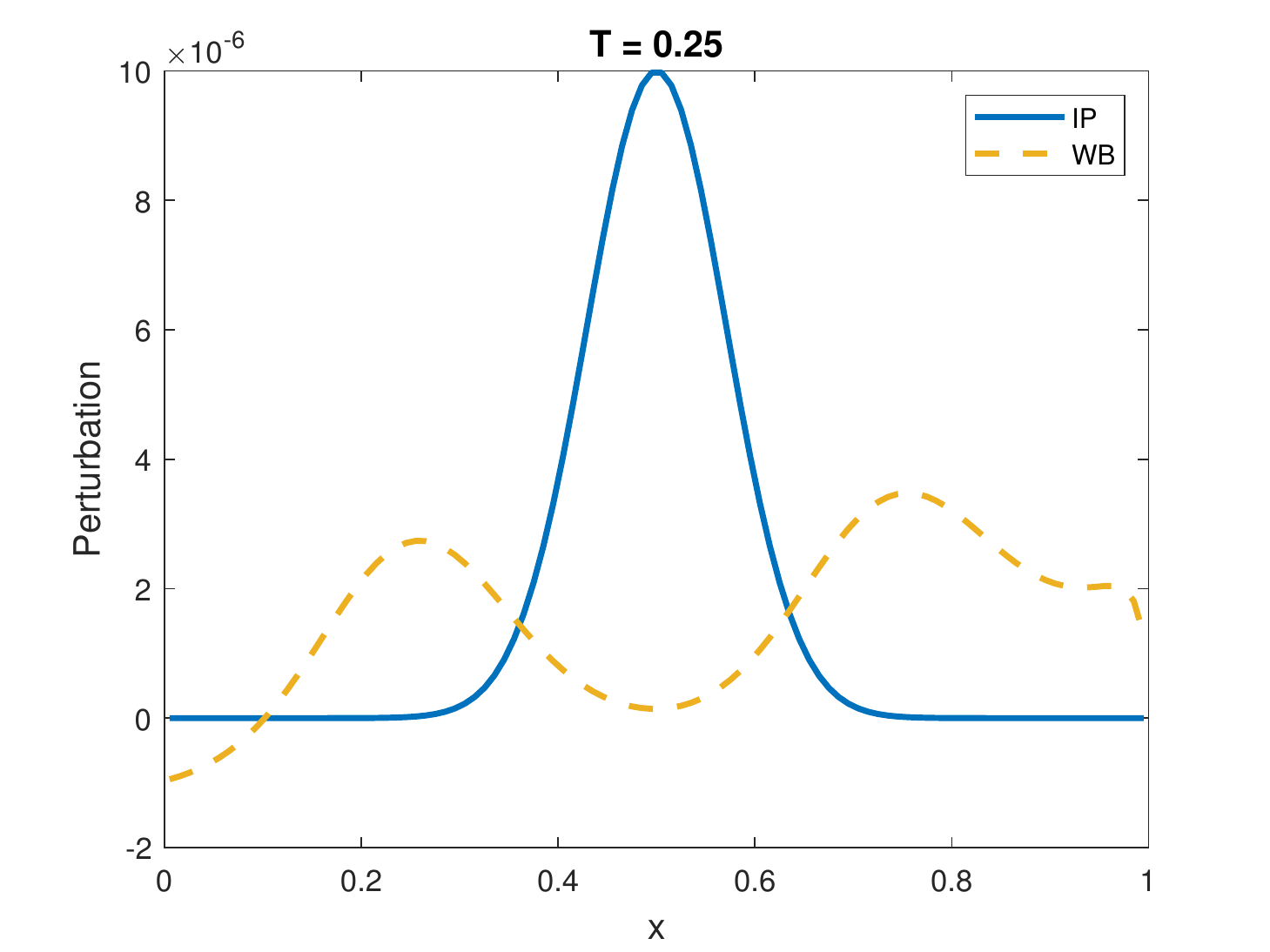}
  \caption{}
\end{subfigure}
\caption{Isothermal test: evolution in the density perturbation in the
  non-stiff regime for $\veps=1$. Comparison of well-balanced
  scheme with non-well balanced scheme for different values of
  $\zeta$. (A) $\zeta = 10^{-3}$ (B) $\zeta = 10^{-5}$. (C) Comparison
  of well-balanced scheme with initial perturbation for $\zeta =
  10^{-5}$.}  
\label{fig:it_pert_eps1}
\end{figure}
\begin{figure}
\centering
\begin{subfigure}{.4\textwidth}
  \centering
  \includegraphics[width=\linewidth]{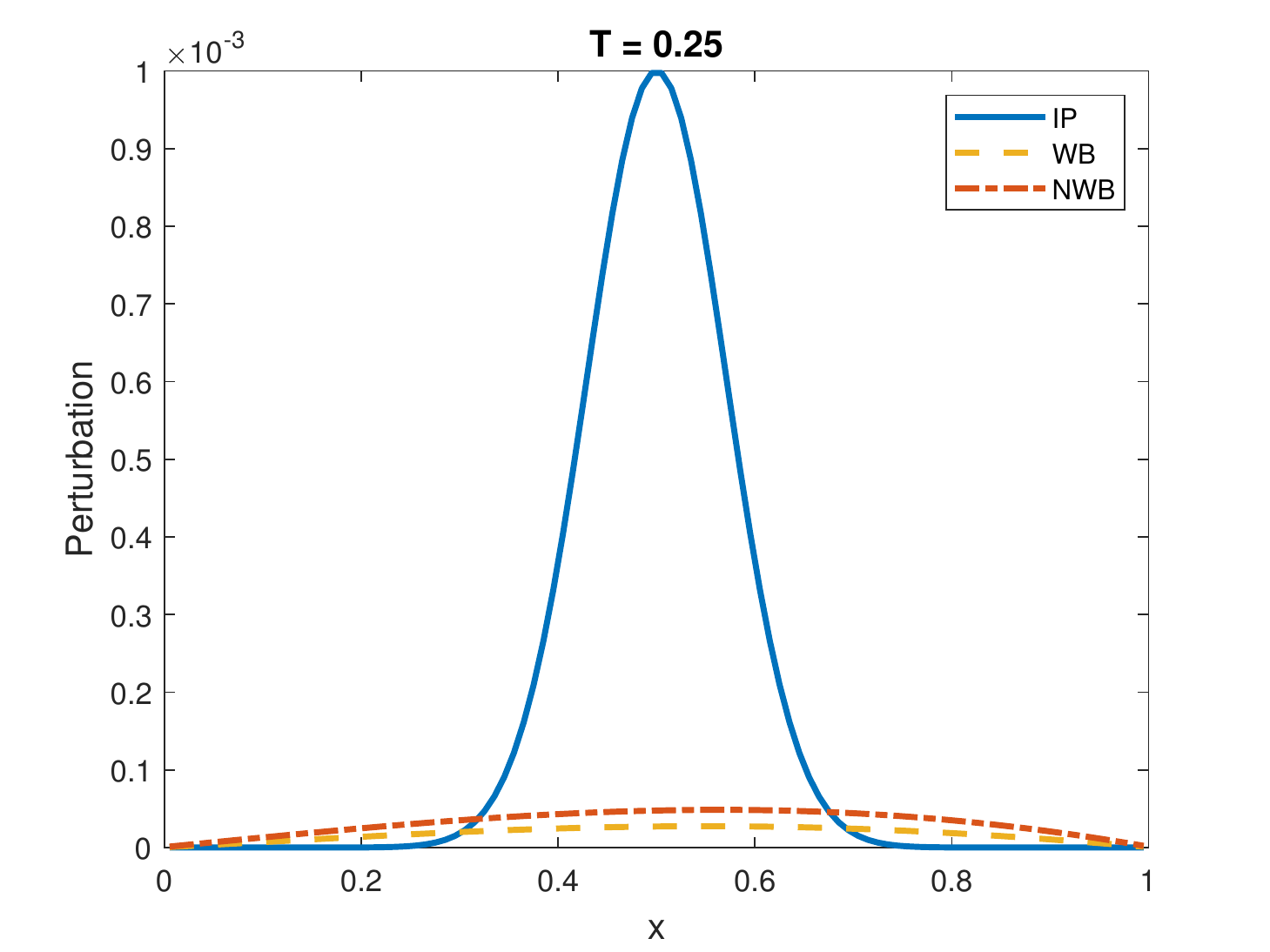}
  \caption{}
\end{subfigure}%
\begin{subfigure}{.4\textwidth}
  \centering
  \includegraphics[width=\linewidth]{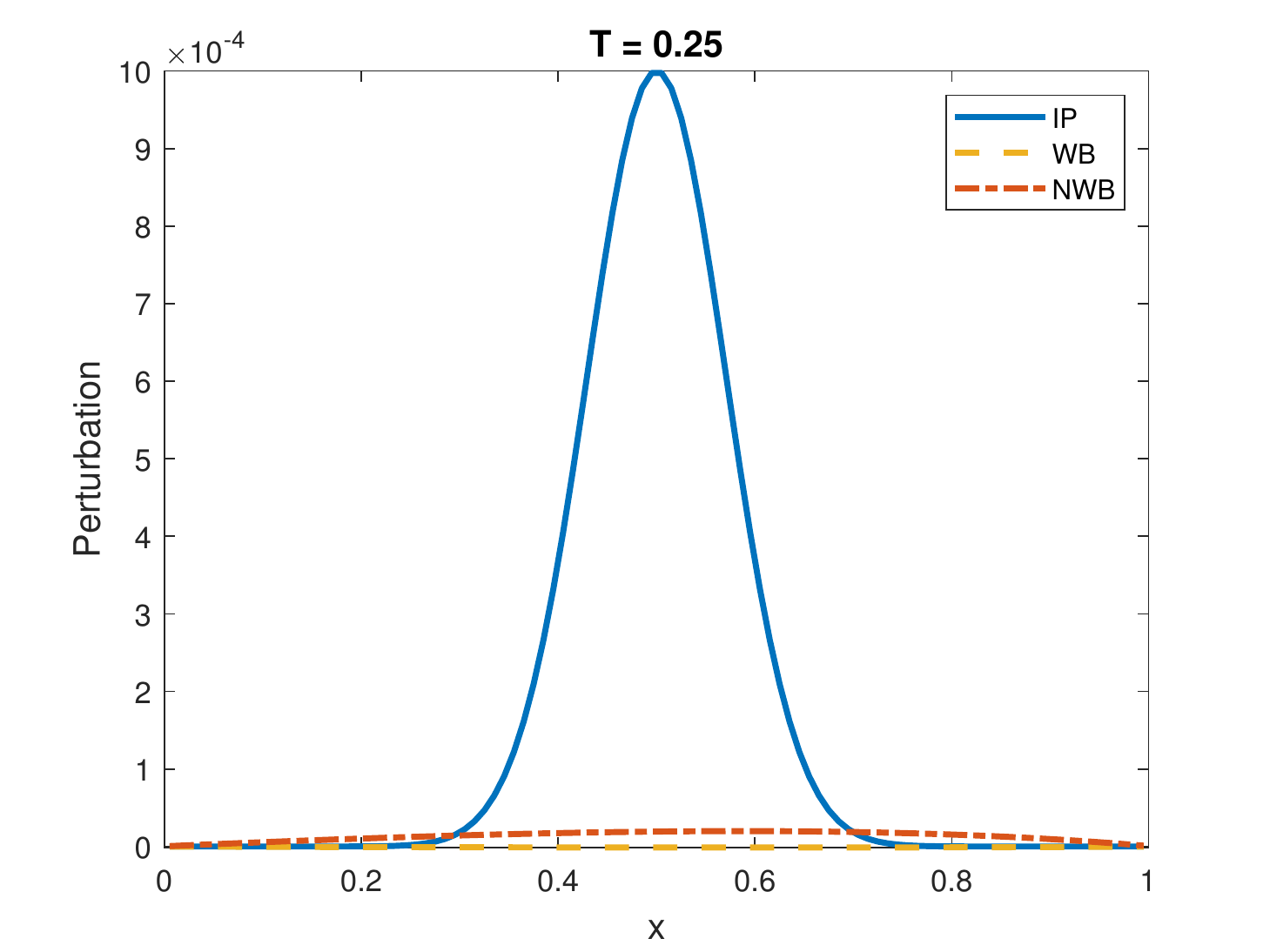}
  \caption{}
\end{subfigure}
\caption{Isothermal test: evolution in the density perturbation in the stiff regime for
  $\veps=0.001$ for (a) $\zeta = 10^{-3}$ (b) $\zeta = 10^{-5}$.}  
\label{fig:it_pert_eps10-3}
\end{figure}

Finally we also study the long time behaviour of the scheme to
corroborate its convergence to a steady state by simulating the 
isothermal hydrostatic solution until a large time $T=100$. We take
the initial data \eqref{eq:isothermpert} at equilibrium with a small
perturbation with amplitude $\zeta=10^{-3}$. The boundary conditions
again the exact solution interpolated onto the grid for the domain
$[0,1]$. For $\veps = 1$, the momentum converges accurately for the
well-balanced scheme but the non well-balanced scheme diverges away
from the stationary solution, cf.\ Figure~\ref{fig:lt_eps1}. 
\begin{figure}
\centering
\begin{subfigure}{.4\textwidth}
  \centering
  \includegraphics[width=\linewidth]{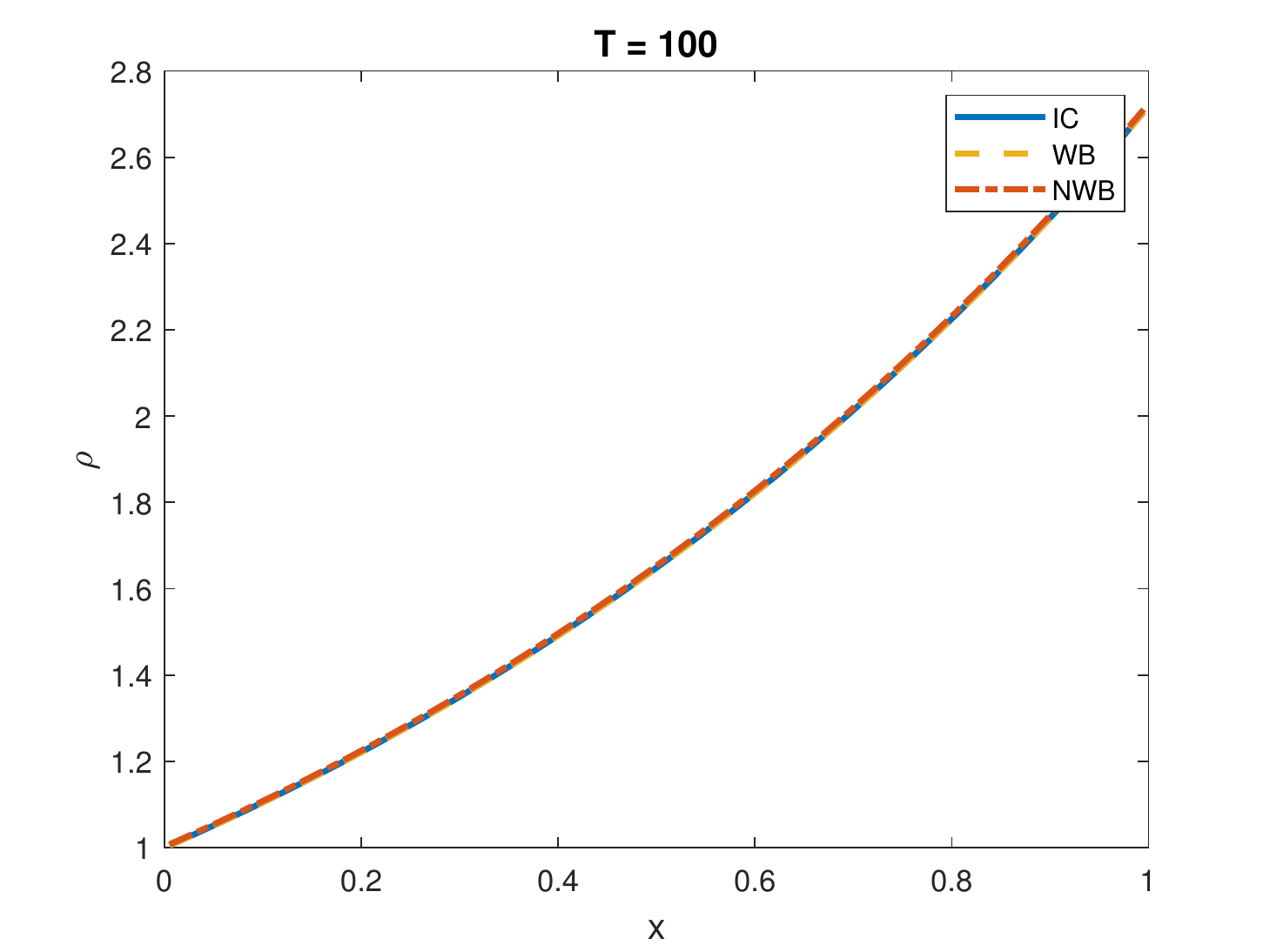}
  \caption{}
\end{subfigure}%
\begin{subfigure}{.4\textwidth}
  \centering
  \includegraphics[width=\linewidth]{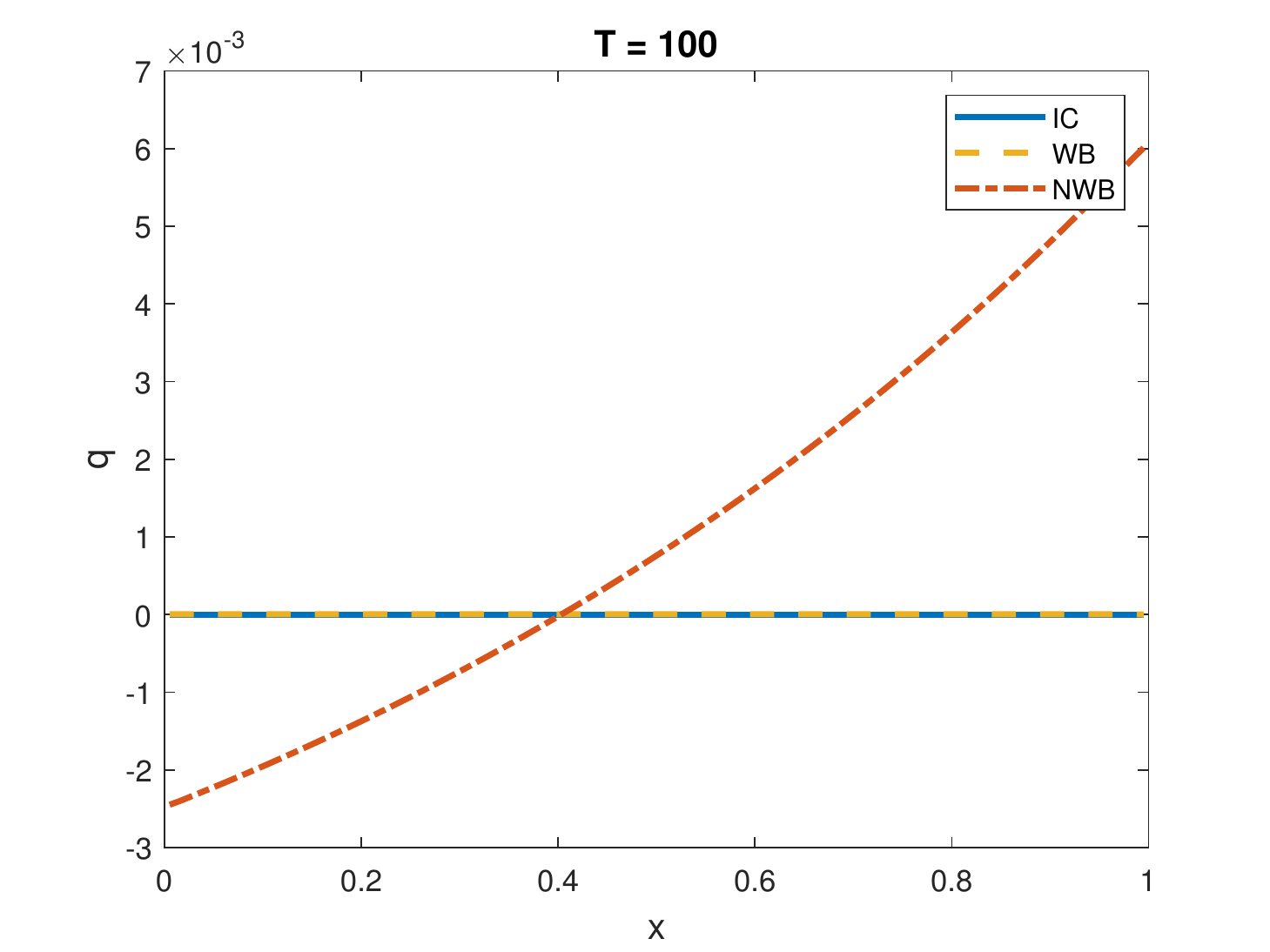}
  \caption{}
\end{subfigure}
\caption{Isothermal test: long time solution profiles when $\veps = 1$
  for (A) $\rho$ and (B) $q$.}   
\label{fig:lt_eps1}
\end{figure}
\begin{figure}
\centering
\begin{subfigure}{.4\textwidth}
  \centering
  \includegraphics[width=\linewidth]{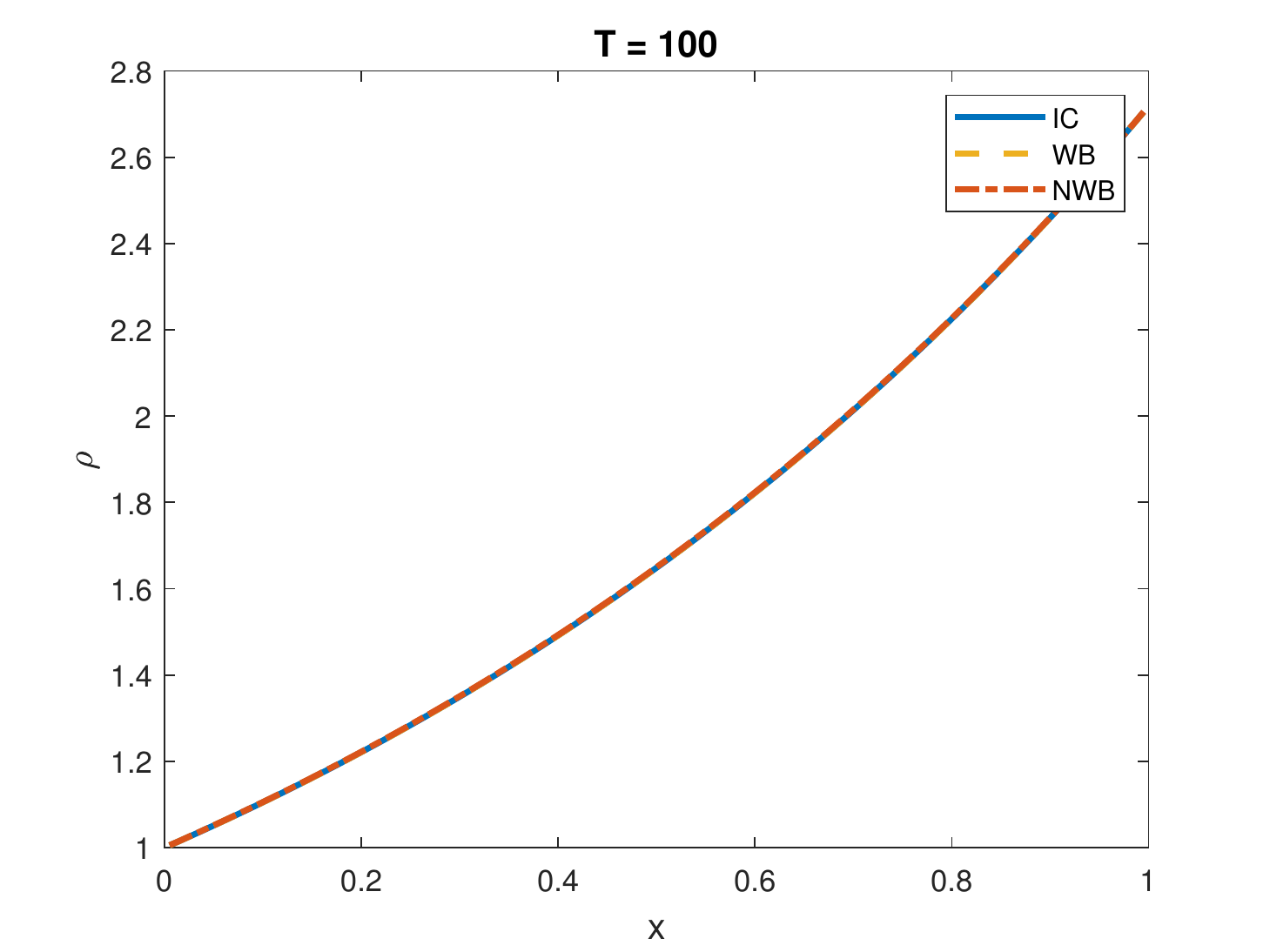}
  \caption{}
\end{subfigure}%
\begin{subfigure}{.4\textwidth}
  \centering
  \includegraphics[width=\linewidth]{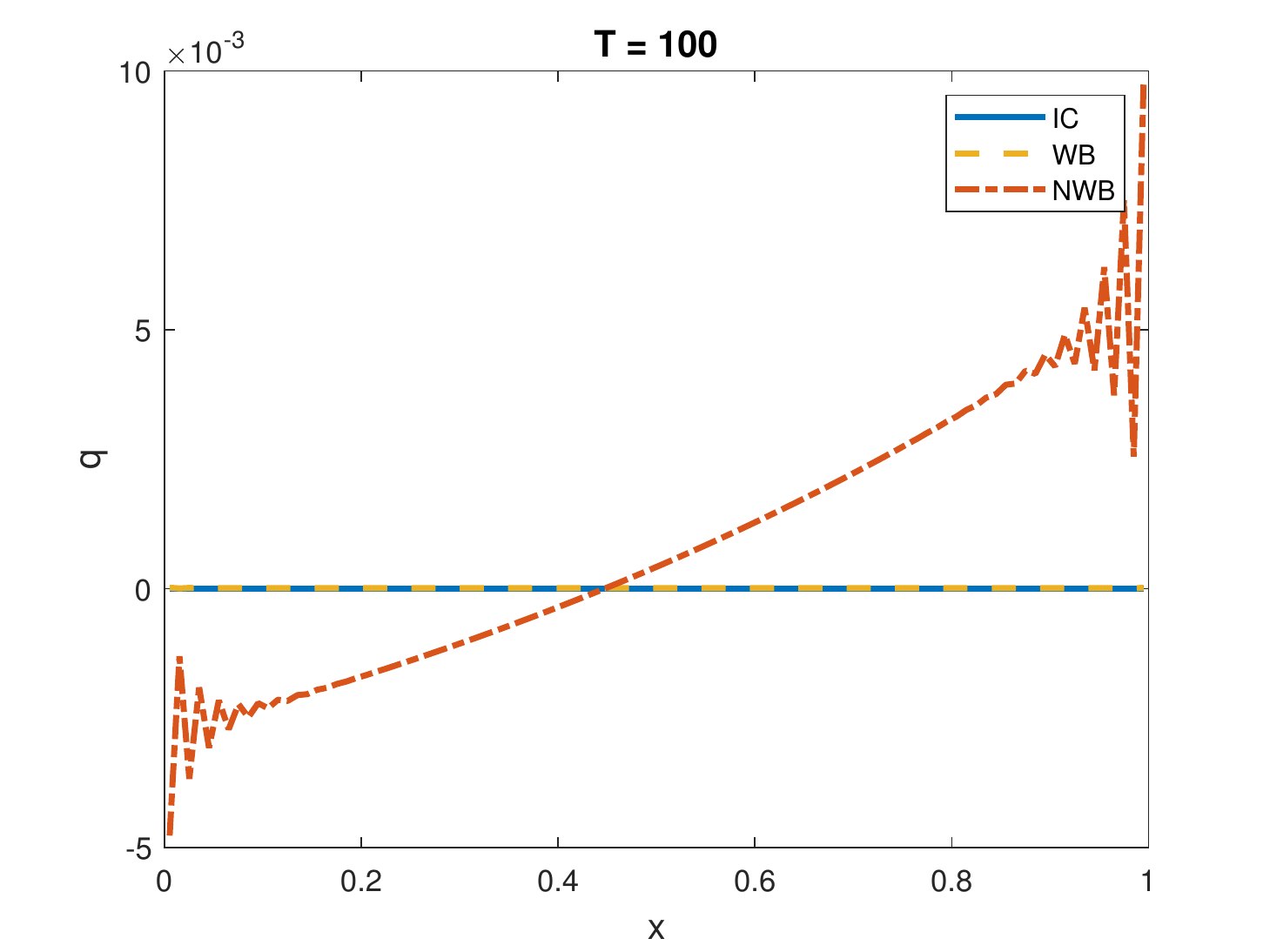}
  \caption{}
\end{subfigure}
\caption{Isothermal test: long time solution profiles when $\veps =
  0.1$ for (A) $\rho$ and (B) $q$.}  
\label{fig:lt_eps10-1}
\end{figure}
\begin{figure}
\centering
\begin{subfigure}{.4\textwidth}
  \centering
  \includegraphics[width=\linewidth]{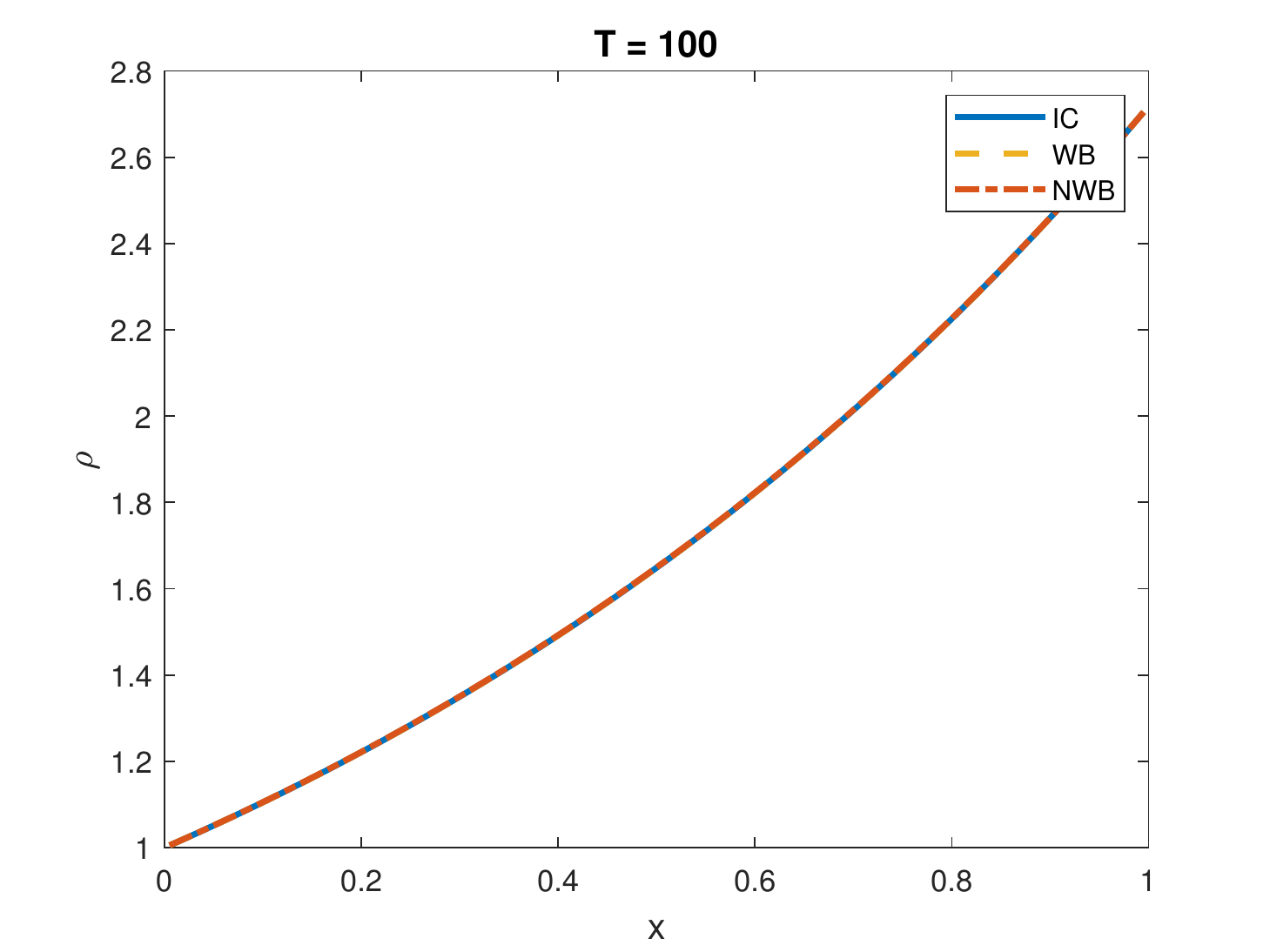}
  \caption{}
\end{subfigure}%
\begin{subfigure}{.4\textwidth}
  \centering
  \includegraphics[width=\linewidth]{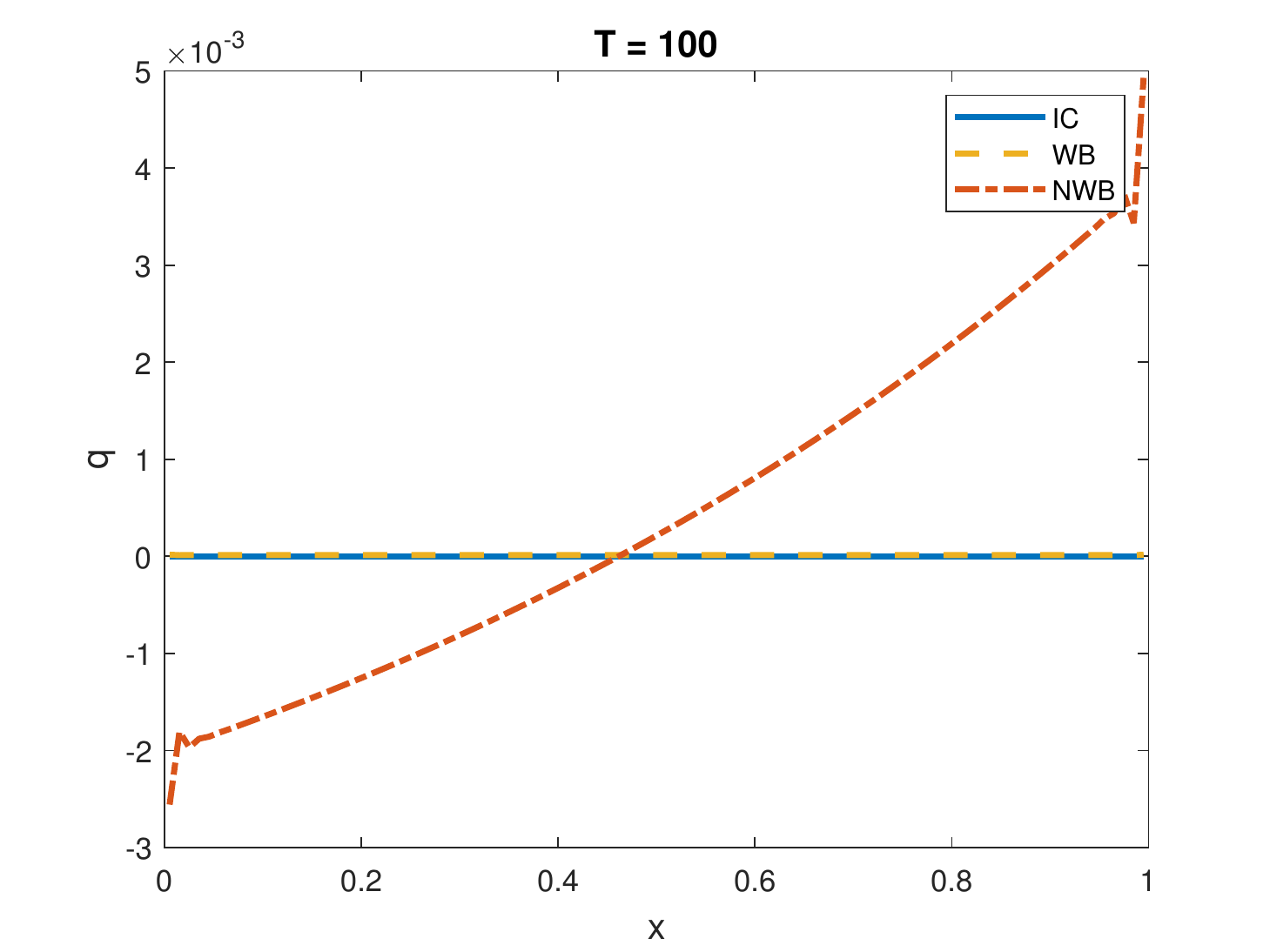}
  \caption{}
\end{subfigure}
\caption{Isothermal test: long time solution profiles when $\veps =
  0.01$ for (A) $\rho$ and (B) $q$.}  
\label{fig:lt_eps10-2}
\end{figure}
\begin{figure}[h]
\centering
\begin{subfigure}{.4\textwidth}
  \centering
  \includegraphics[width=\linewidth]{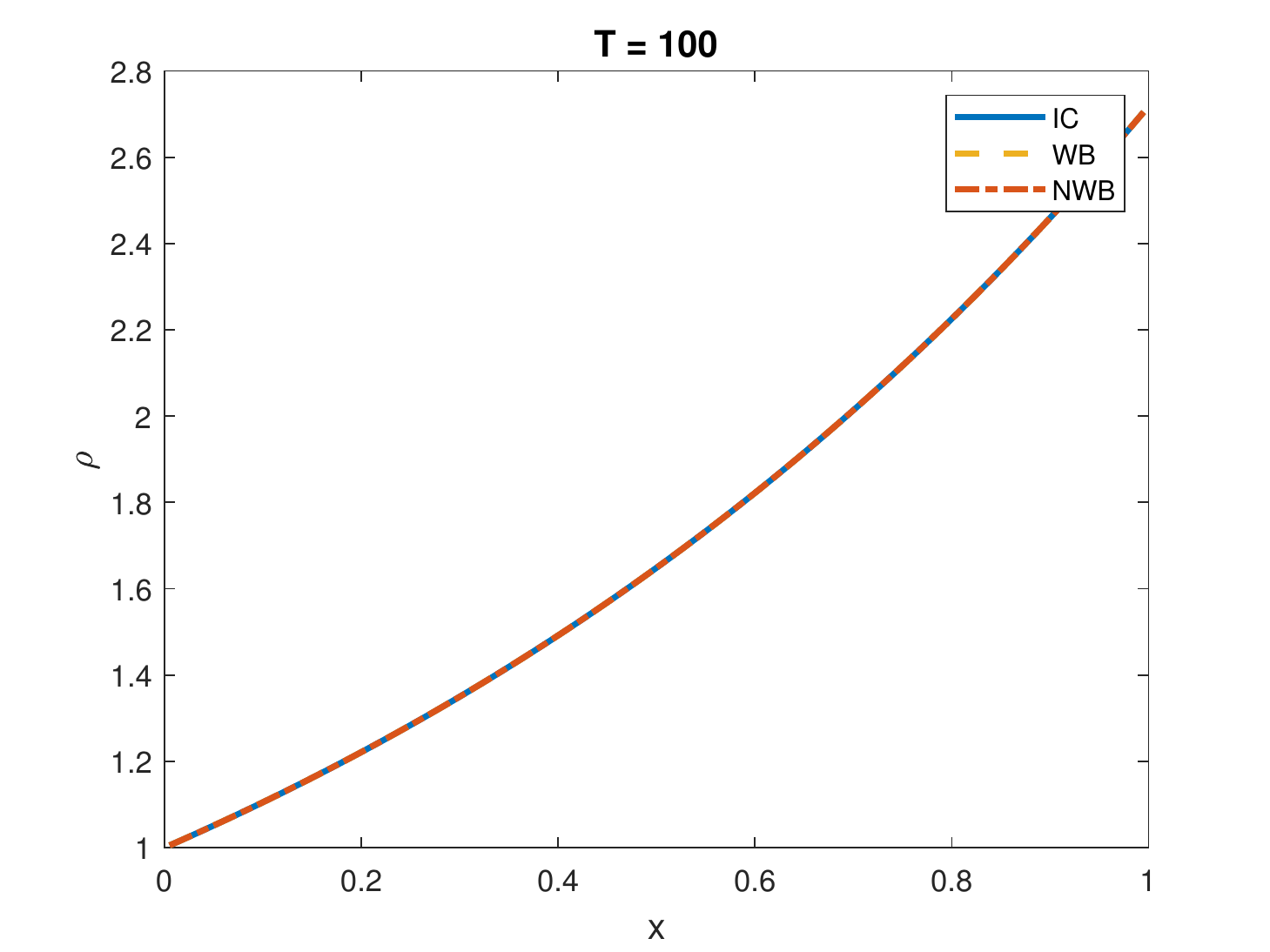}
  \caption{}
\end{subfigure}%
\begin{subfigure}{.4\textwidth}
  \centering
  \includegraphics[width=\linewidth]{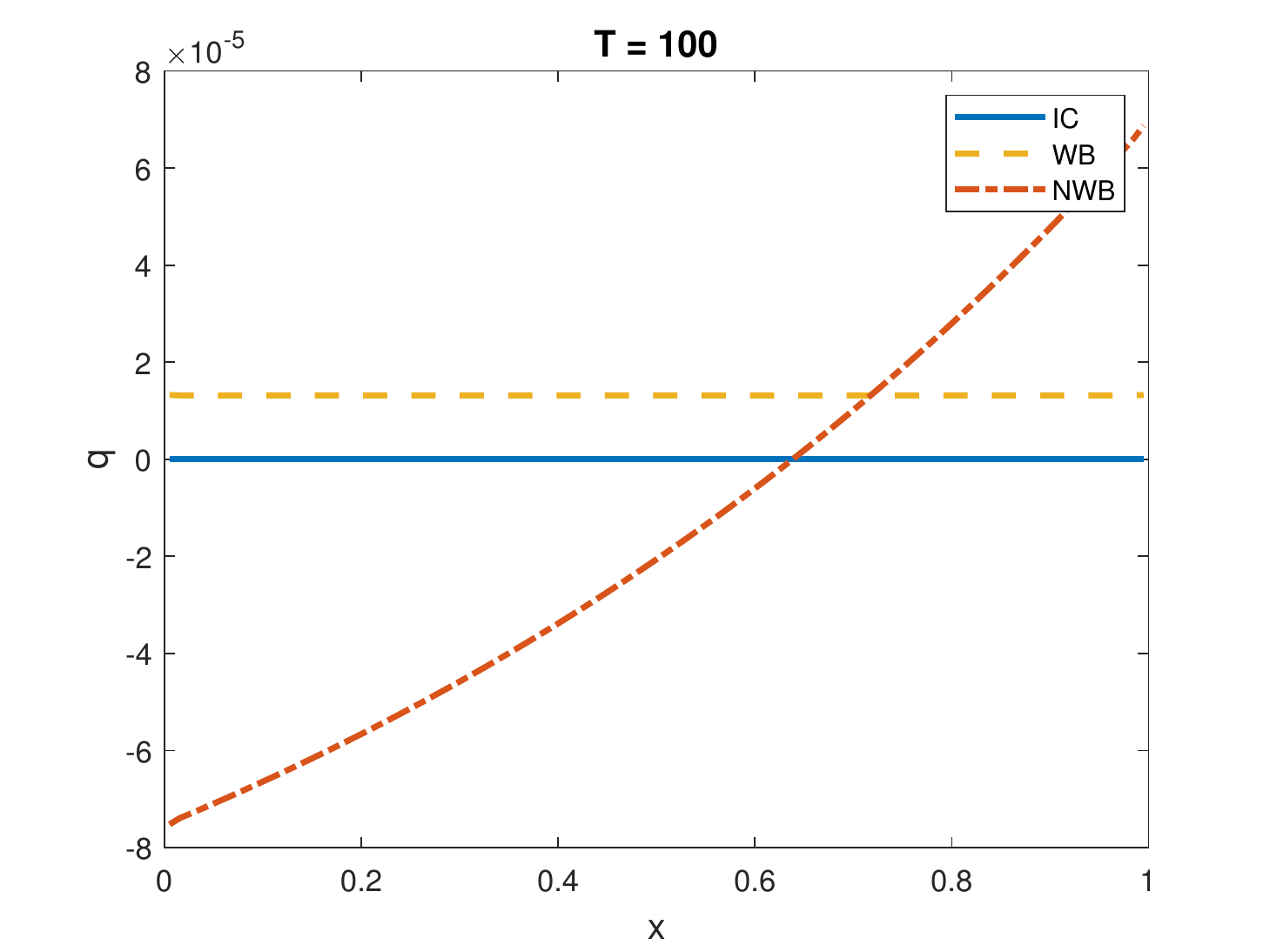}
  \caption{}
\end{subfigure}
\caption{Isothermal test: long time solution profiles when $\veps =
  0.001$ for (A) $\rho$ and (B) $q$.}  
\label{fig:lt_eps10-3}
\end{figure} 

As was noted before, in the asymptotic regime, the non well-balanced
AP scheme also exudes the well-balancing property, and hence
it can be seen from Figure \ref{fig:lt_eps10-3} that the results are
comparable with that of the well balanced scheme when
$\veps=0.001$. The oscillations that can be seen for intermediate
values of the stiffness parameter ($\veps = 0.1, 0.01$) in Figures
\ref{fig:lt_eps10-1}-\ref{fig:lt_eps10-2} for the non well-balanced
scheme can be attributed to the loss of accuracy for the IMEX scheme
in this range of $\veps$; see e.g.\ \cite{PR01} for related
discussions. From the three above-mentioned test cases, it is evident
that the scheme preserves the isothermal hydrostatic solution.  

\subsubsection{Isentropic Hydrostatic Solution}
\label{sec:is_hydro}

In this test case, we solve the system which is initially in
isentropic hydrostatic equilibrium for $\beta = 1$. The initial data
reads 
\begin{equation}
  \label{eq:isentrop}
  \rho(0,x) \equiv \rho_e(x) = \left(1
    +\frac{\gamma-1}{\gamma}\phi(x)\right)^\frac{1}{\gamma -1}, \quad
  u(0,x) \equiv u_e(x) = 0.
\end{equation}
The domain is $[0,1]$ and the boundary conditions are
interpolation. We also take the specific heat ratio to be $\gamma =
1.4$.

First, for a range of $\veps$, we test the $L^1$ error of the
scheme for different potential functions and for different mesh
sizes. The simulations are run for a final time of $T=2.$ Results are
presented in Table \ref{Tab:isentrop_err}. Similar to the isothermal
hydrostatic test case, here too we observe good precision in preserving
the steady state solution. 
\begin{table}[h]
\begin{subtable}{.51\linewidth}
\small
\begin{tabular}{cccc}
\hline
\multicolumn{4}{|c|}{$\veps = 1$}                                                                                                                      \\ \hline
\multicolumn{1}{|c|}{$\phi$}                          & \multicolumn{1}{c|}{N}    & \multicolumn{1}{c|}{Error in $\rho$} & \multicolumn{1}{c|}{Error in $q$} \\ \hline
\multicolumn{1}{|c|}{\multirow{2}{*}{$x$}}            & \multicolumn{1}{c|}{100}  & \multicolumn{1}{c|}{3.0399E-07}      & \multicolumn{1}{c|}{7.6093E-07}   \\ \cline{2-4} 
\multicolumn{1}{|c|}{}                                & \multicolumn{1}{c|}{1000} & \multicolumn{1}{c|}{3.0717E-09}      & \multicolumn{1}{c|}{7.4895E-09}   \\ \hline
\multicolumn{1}{|c|}{\multirow{2}{*}{$x^2/2$}}        & \multicolumn{1}{c|}{100}  & \multicolumn{1}{c|}{1.5297E-07}      & \multicolumn{1}{c|}{1.8205E-07}   \\ \cline{2-4} 
\multicolumn{1}{|c|}{}                                & \multicolumn{1}{c|}{1000} & \multicolumn{1}{c|}{1.4429E-09}      & \multicolumn{1}{c|}{1.7516E-09}   \\ \hline
\multicolumn{1}{|c|}{\multirow{2}{*}{sin($2 \pi x$)}} & \multicolumn{1}{c|}{100}  & \multicolumn{1}{c|}{3.5858E-05}      & \multicolumn{1}{c|}{1.6335E-06}   \\ \cline{2-4} 
\multicolumn{1}{|c|}{}                                & \multicolumn{1}{c|}{1000} & \multicolumn{1}{c|}{3.5668E-07}      & \multicolumn{1}{c|}{3.4458E-09}   \\ \hline
\end{tabular}
\end{subtable}
\begin{subtable}{.48\linewidth}
\small
\begin{tabular}{llll}
\hline
\multicolumn{4}{|c|}{$\veps = 0.1$}                                                                                                                    \\ \hline
\multicolumn{1}{|c|}{$\phi$}                          & \multicolumn{1}{c|}{N}    & \multicolumn{1}{c|}{Error in $\rho$} & \multicolumn{1}{c|}{Error in $q$} \\ \hline
\multicolumn{1}{|c|}{\multirow{2}{*}{$x$}}            & \multicolumn{1}{c|}{100}  & \multicolumn{1}{c|}{8.8097E-08}      & \multicolumn{1}{c|}{2.2017E-06}   \\ \cline{2-4} 
\multicolumn{1}{|c|}{}                                & \multicolumn{1}{c|}{1000} & \multicolumn{1}{c|}{8.9517E-10}      & \multicolumn{1}{c|}{2.1593E-08}   \\ \hline
\multicolumn{1}{|c|}{\multirow{2}{*}{$x^2/2$}}        & \multicolumn{1}{c|}{100}  & \multicolumn{1}{c|}{1.3723E-07}      & \multicolumn{1}{c|}{5.0076E-07}   \\ \cline{2-4} 
\multicolumn{1}{|c|}{}                                & \multicolumn{1}{c|}{1000} & \multicolumn{1}{c|}{1.2993E-09}      & \multicolumn{1}{c|}{4.8594E-09}   \\ \hline
\multicolumn{1}{|c|}{\multirow{2}{*}{sin($2 \pi x$)}} & \multicolumn{1}{c|}{100}  & \multicolumn{1}{c|}{3.6382E-05}      & \multicolumn{1}{c|}{4.4131E-06}   \\ \cline{2-4} 
\multicolumn{1}{|c|}{}                                & \multicolumn{1}{c|}{1000} & \multicolumn{1}{c|}{3.5705E-07}      & \multicolumn{1}{c|}{4.4168E-09}   \\ \hline
\end{tabular}
\end{subtable}%
\vspace{5mm}

\begin{subtable}{.51\linewidth}
\small
\begin{tabular}{llll}
\hline
\multicolumn{4}{|c|}{$\veps = 0.01$}                                                                                                                   \\ \hline
\multicolumn{1}{|c|}{$\phi$}                          & \multicolumn{1}{c|}{N}    & \multicolumn{1}{c|}{Error in $\rho$} & \multicolumn{1}{c|}{Error in $q$} \\ \hline
\multicolumn{1}{|c|}{\multirow{2}{*}{$x$}}            & \multicolumn{1}{c|}{100}  & \multicolumn{1}{c|}{2.2198E-08}      & \multicolumn{1}{c|}{2.6719E-06}   \\ \cline{2-4} 
\multicolumn{1}{|c|}{}                                & \multicolumn{1}{c|}{1000} & \multicolumn{1}{c|}{2.0347E-10}      & \multicolumn{1}{c|}{2.6542E-08}   \\ \hline
\multicolumn{1}{|c|}{\multirow{2}{*}{$x^2/2$}}        & \multicolumn{1}{c|}{100}  & \multicolumn{1}{c|}{1.3446E-07}      & \multicolumn{1}{c|}{5.9381E-07}   \\ \cline{2-4} 
\multicolumn{1}{|c|}{}                                & \multicolumn{1}{c|}{1000} & \multicolumn{1}{c|}{1.2895E-09}      & \multicolumn{1}{c|}{5.8580E-09}   \\ \hline
\multicolumn{1}{|c|}{\multirow{2}{*}{sin($2 \pi x$)}} & \multicolumn{1}{c|}{100}  & \multicolumn{1}{l|}{3.6564E-05}      & \multicolumn{1}{l|}{5.1607E-06}   \\ \cline{2-4} 
\multicolumn{1}{|c|}{}                                & \multicolumn{1}{c|}{1000} & \multicolumn{1}{l|}{3.5718E-07}      & \multicolumn{1}{l|}{5.1886E-09}   \\ \hline
\end{tabular}
\end{subtable}
\begin{subtable}{.48\linewidth}
\small
\begin{tabular}{llll}
\hline
\multicolumn{4}{|c|}{$\veps = 0.001$}                                                                                                                  \\ \hline
\multicolumn{1}{|c|}{$\phi$}                          & \multicolumn{1}{c|}{N}    & \multicolumn{1}{c|}{Error in $\rho$} & \multicolumn{1}{c|}{Error in $q$} \\ \hline
\multicolumn{1}{|c|}{\multirow{2}{*}{$x$}}            & \multicolumn{1}{c|}{100}  & \multicolumn{1}{c|}{1.8445E-08}      & \multicolumn{1}{c|}{2.7065E-06}   \\ \cline{2-4} 
\multicolumn{1}{|c|}{}                                & \multicolumn{1}{c|}{1000} & \multicolumn{1}{c|}{1.6062E-10}      & \multicolumn{1}{c|}{2.7060E-08}   \\ \hline
\multicolumn{1}{|c|}{\multirow{2}{*}{$x^2/2$}}        & \multicolumn{1}{c|}{100}  & \multicolumn{1}{c|}{1.3427E-07}      & \multicolumn{1}{c|}{5.9992E-07}   \\ \cline{2-4} 
\multicolumn{1}{|c|}{}                                & \multicolumn{1}{c|}{1000} & \multicolumn{1}{c|}{1.2840E-09}      & \multicolumn{1}{c|}{5.9572E-09}   \\ \hline
\multicolumn{1}{|c|}{\multirow{2}{*}{sin($2 \pi x$)}} & \multicolumn{1}{c|}{100}  & \multicolumn{1}{l|}{3.6580E-05}      & \multicolumn{1}{l|}{5.2108E-06}   \\ \cline{2-4} 
\multicolumn{1}{|c|}{}                                & \multicolumn{1}{c|}{1000} & \multicolumn{1}{l|}{3.5719E-07}      & \multicolumn{1}{l|}{5.2778E-09}   \\ \hline
\end{tabular}
\end{subtable}
\caption{Errors in the density $\rho$ and the momentum $q$ for
  different potentials and for a range of $\veps$ using different mesh sizes in the
  isentropic test case.} 
\label{Tab:isentrop_err}
\end{table}

Next, we test the efficacy of the scheme in simulating the evolution of
small density perturbations added to the initial data, i.e.\ 
\begin{equation*}
  \rho(0,x) =  \left(1
    +\frac{\gamma-1}{\gamma}\phi(x)\right)^\frac{1}{\gamma -1} + \zeta
  \text{exp}(-100(x-0.5)^2). 
\end{equation*}

Simulations are performed upto a final time $T = 0.25$ on a mesh of 100
cells with interpolation boundary conditions for $\zeta = 10^{-3},
10^{-5}$. The results are presented in Figure \ref{fig:is_pert_eps1}
for the non-stiff regime. Evidently the well-balanced scheme is able to
resolve the solution much better than the non well-balanced scheme for
both high and low values of $\veps$. In the stiff regime, the
non well-balanced scheme's performance improves due to its AP property
as is evident from Figure \ref{fig:is_pert_eps10-3}. 

Thus in this subsection, we have shown that the scheme performs well in
maintaining the steady states of the Euler system for both isothermal
and isentropic equations of state with very good accuracy. 

\begin{figure}
\centering
\begin{subfigure}{.32\textwidth}
  \centering
  \includegraphics[width=\linewidth]{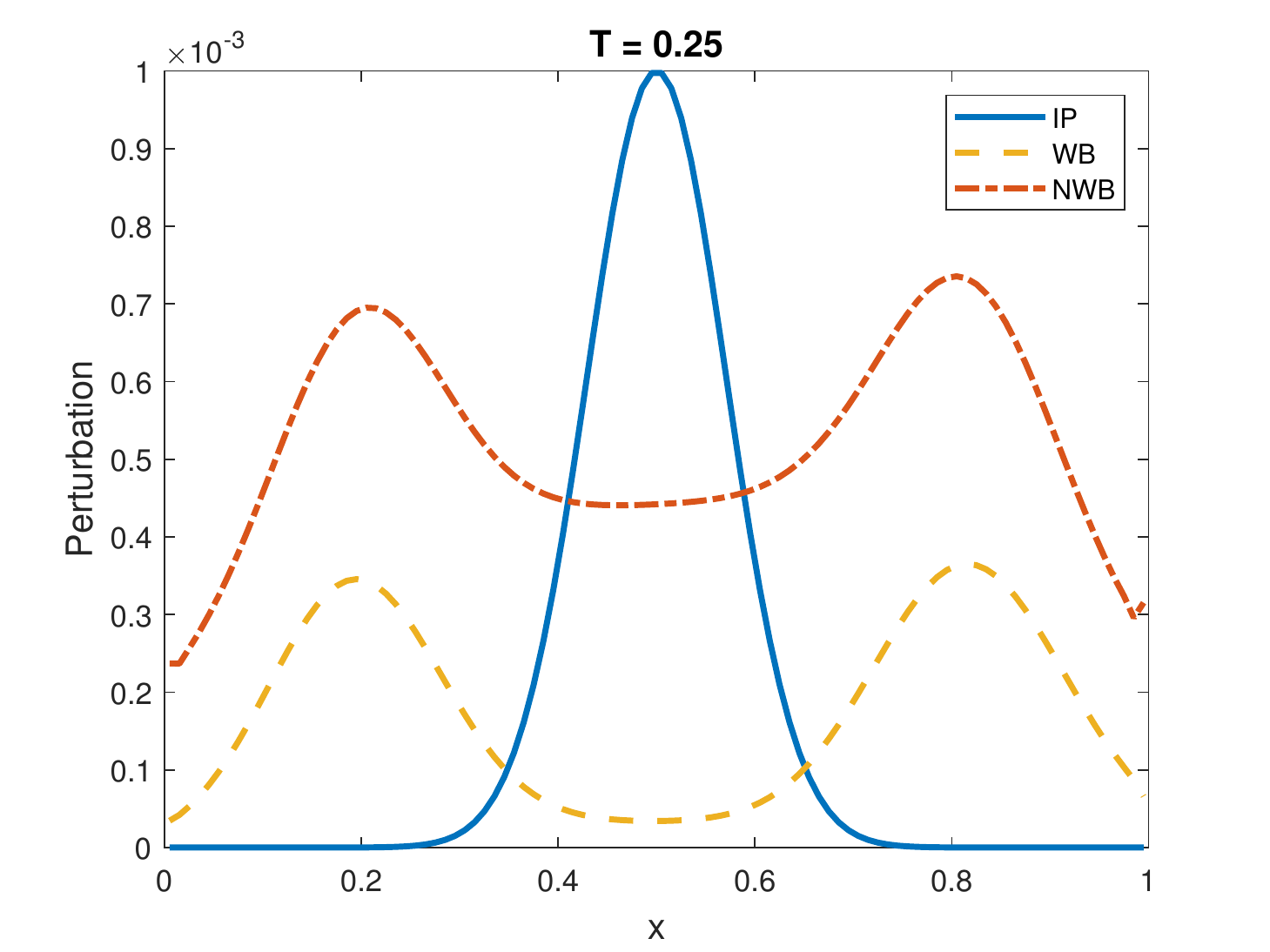}
  \caption{}
\end{subfigure}%
\begin{subfigure}{.32\textwidth}
  \centering
  \includegraphics[width=\linewidth]{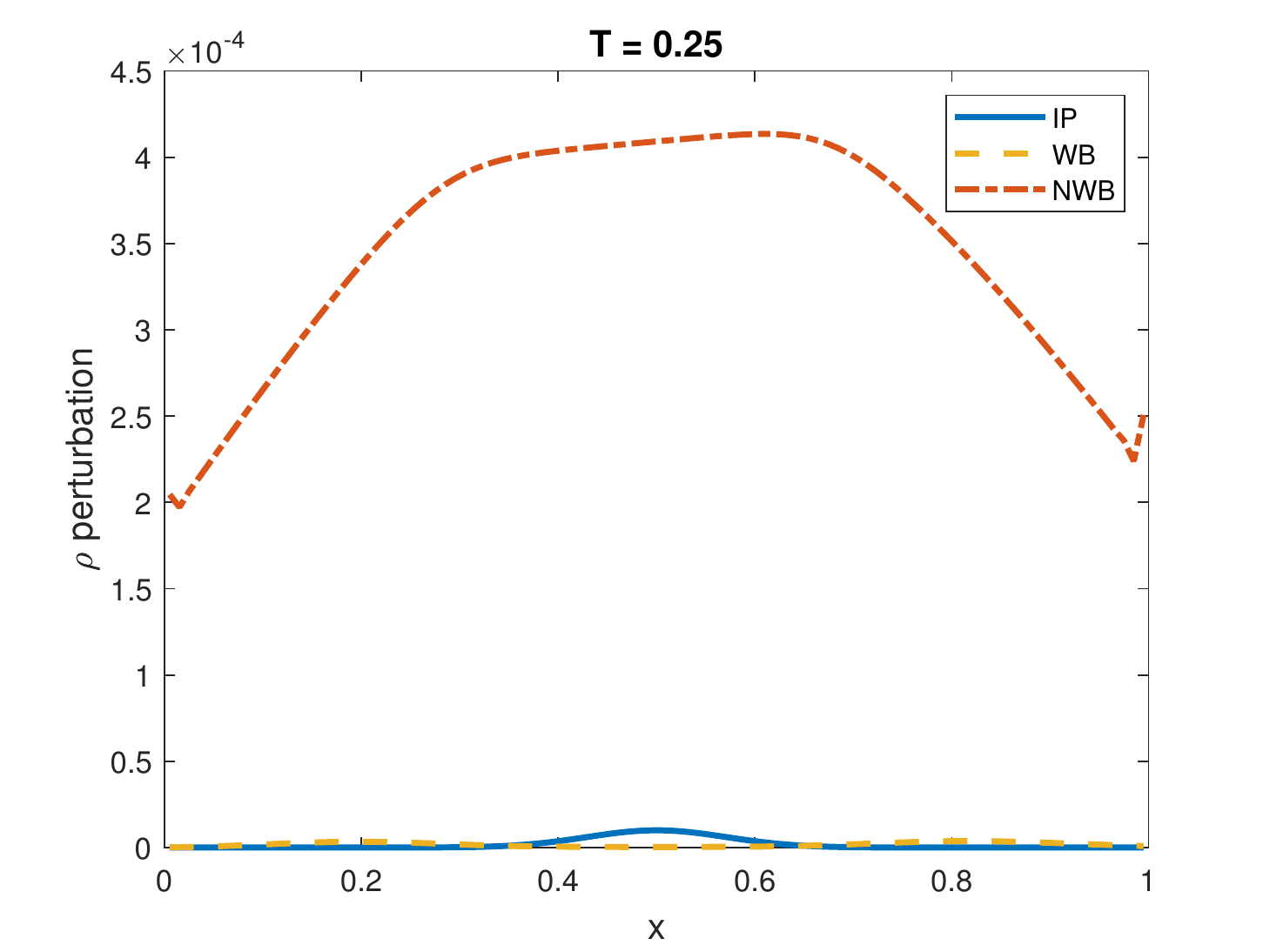}
  \caption{}
\end{subfigure}
\begin{subfigure}{.32\textwidth}
  \centering
  \includegraphics[width=\linewidth]{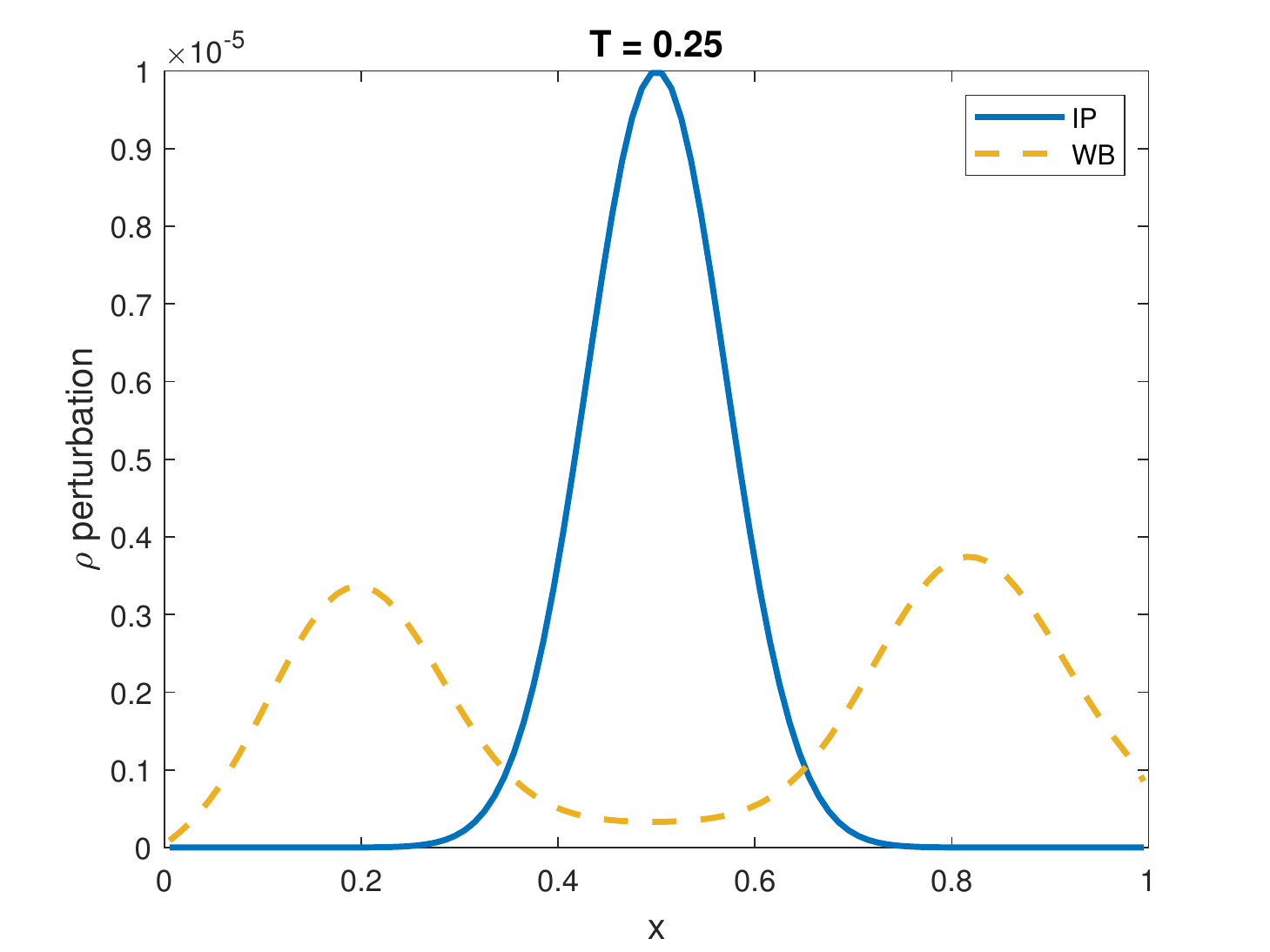}
  \caption{}
\end{subfigure}
\caption{Isentropic test: evolution in the density perturbation in the
  non-stiff regime for $\veps=1$. (a) $\zeta = 10^{-3}$ (b)
  $\zeta = 10^{-5}$; comparison of well-balanced scheme with non-well
  balanced (c) $\zeta = 10^{-5}$; comparison of well-balanced scheme
  with initial perturbation.}   
\label{fig:is_pert_eps1}
\end{figure}
\begin{figure}[h]
\centering
\begin{subfigure}{.4\textwidth}
  \centering
  \includegraphics[width=\linewidth]{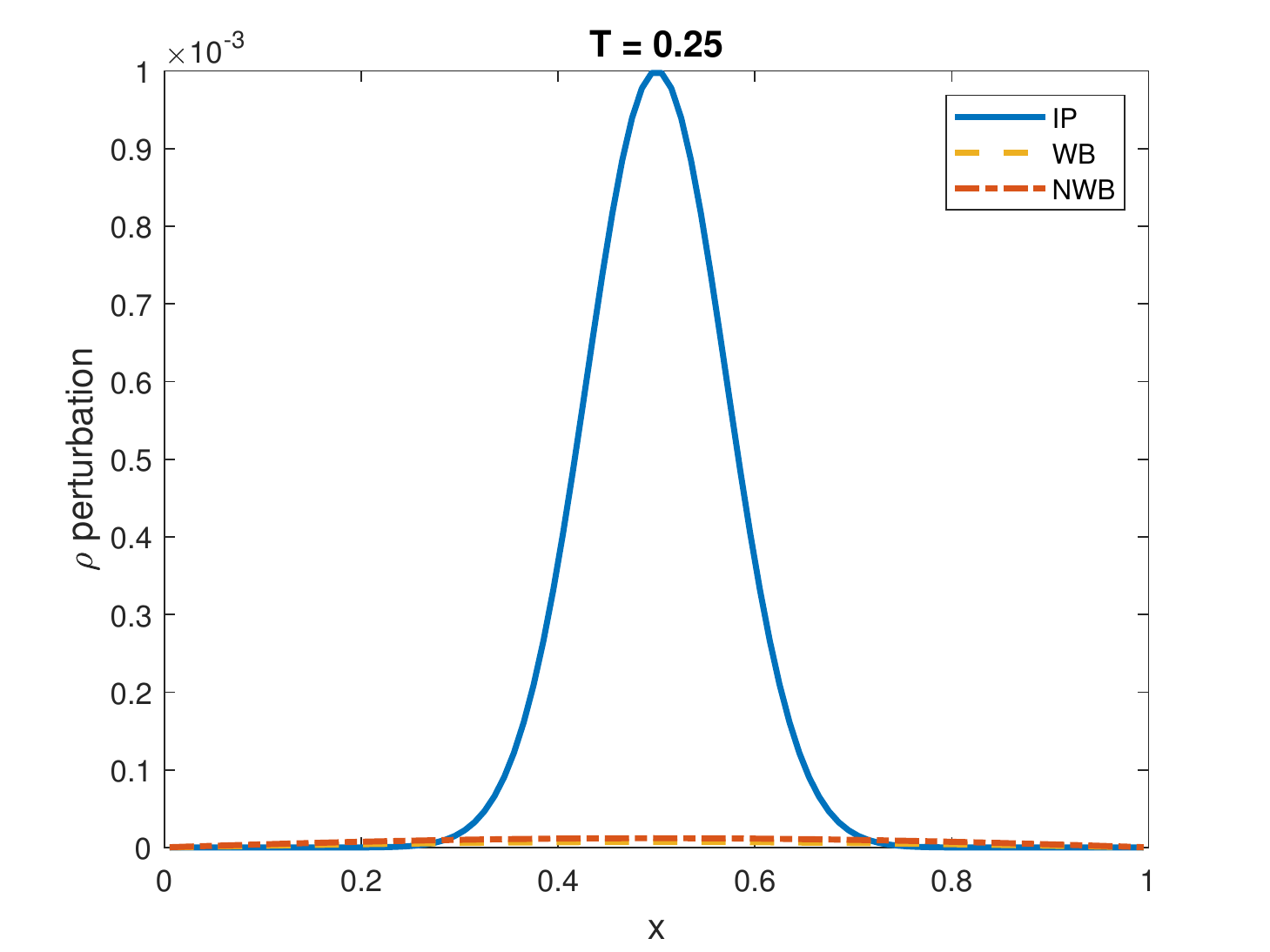}
  \caption{}
\end{subfigure}%
\begin{subfigure}{.4\textwidth}
  \centering
  \includegraphics[width=\linewidth]{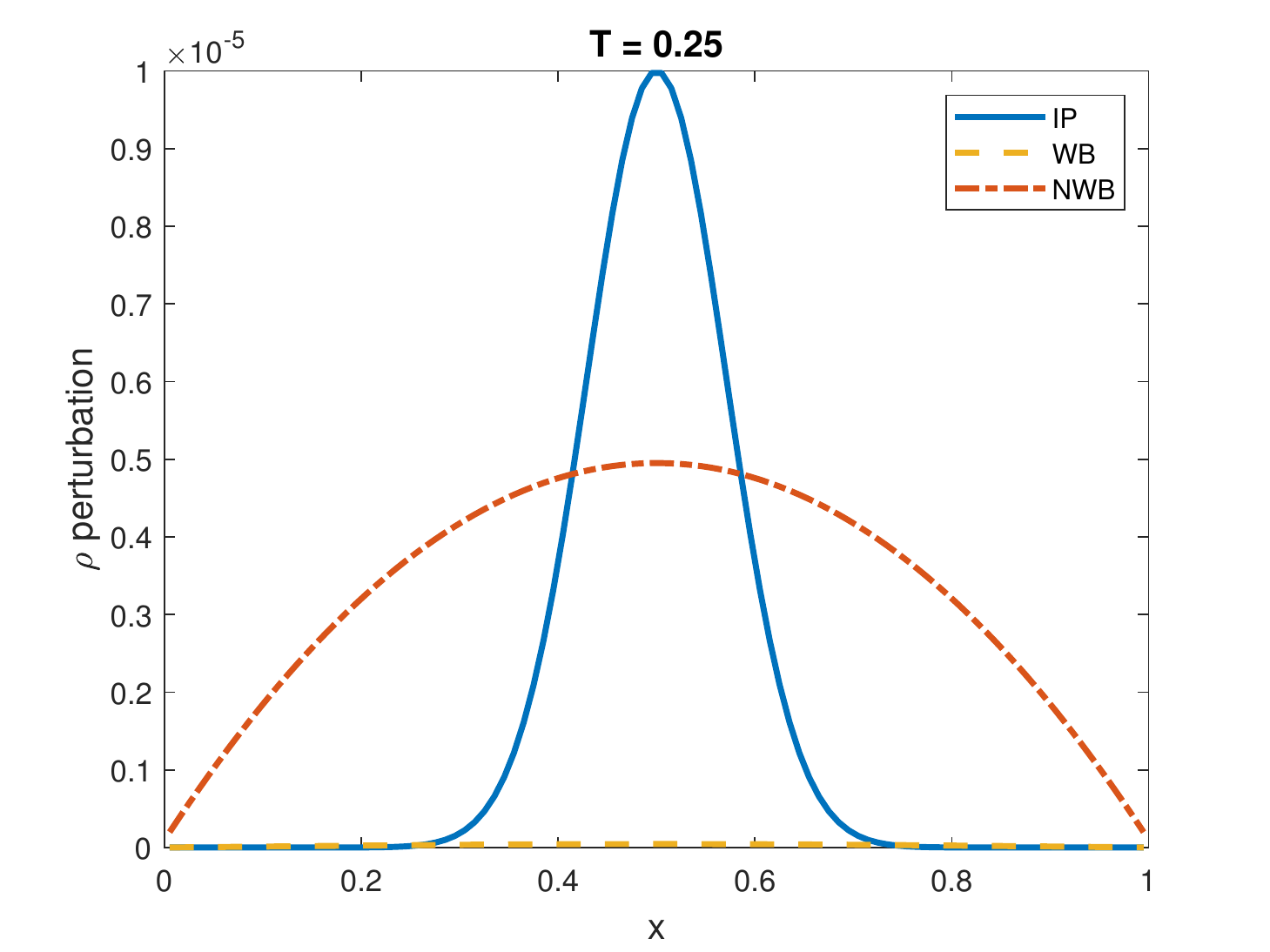}
  \caption{}
\end{subfigure}
\caption{Isentropic test: evolution in the density perturbation in the
  stiff regime for $\veps=0.001$ for (a) $\zeta = 10^{-3}$ (b) $\zeta =
  10^{-5}$.}  
\label{fig:is_pert_eps10-3}
\end{figure}

\subsection{Sensitivity to Mesh Size}
\label{sec:sensitivity}

In this test, our aim is to study the dependence of the accuracy of
the scheme on mesh sizes in the asymptotic regime ($\veps = 0.001$) by 
comparing it with a non AP scheme. For this purpose, the initial data
as given in \cite{CCG10} which is a centered arch function and it
reads 
\begin{equation}
  \label{eq:ChalonsIC}
  (\rho, u) = 
  \begin{cases}
    (1,0), & \text{if}\ -0.2 < x < 0.2, \\
    (2,0), &  \text{otherwise},
  \end{cases}
\end{equation}
in the domain $[-0.5,0.5]$ with periodic boundary conditions and
$\lambda_\CFL = 0.45$. The test is carried out for both hyperbolic
and parabolic relaxations. Figure \ref{fig:para_AP} shows that the AP
scheme's performance does not show any dependence on the mesh sizes,
giving almost identical results for under-resolved ($\Delta x = 0.01$),
resolved ($\Delta x = 0.001$), and over-resolved ($\Delta x = 0.0001$)
meshes in the parabolic regime. However, the non AP scheme blows up
for the under-resolved mesh (Figure \ref{fig:para_NAP_blowup}), but it
shows results similar to the AP scheme for the resolved and
over-resolved meshes as can be seen from Figure
\ref{fig:para_NAP}. Similar conclusions can again be drawn in the case
of hyperbolic relaxation. Results are presented in Figure
\ref{fig:sens_mesh_hyp}. Even though a blowup is not observed for the
non AP scheme, oscillations can be seen at the discontinuities on the
under-resolved mesh. 

\begin{figure}[h]
\centering
\begin{subfigure}{.33\textwidth}
  \centering
  \includegraphics[width=\linewidth]{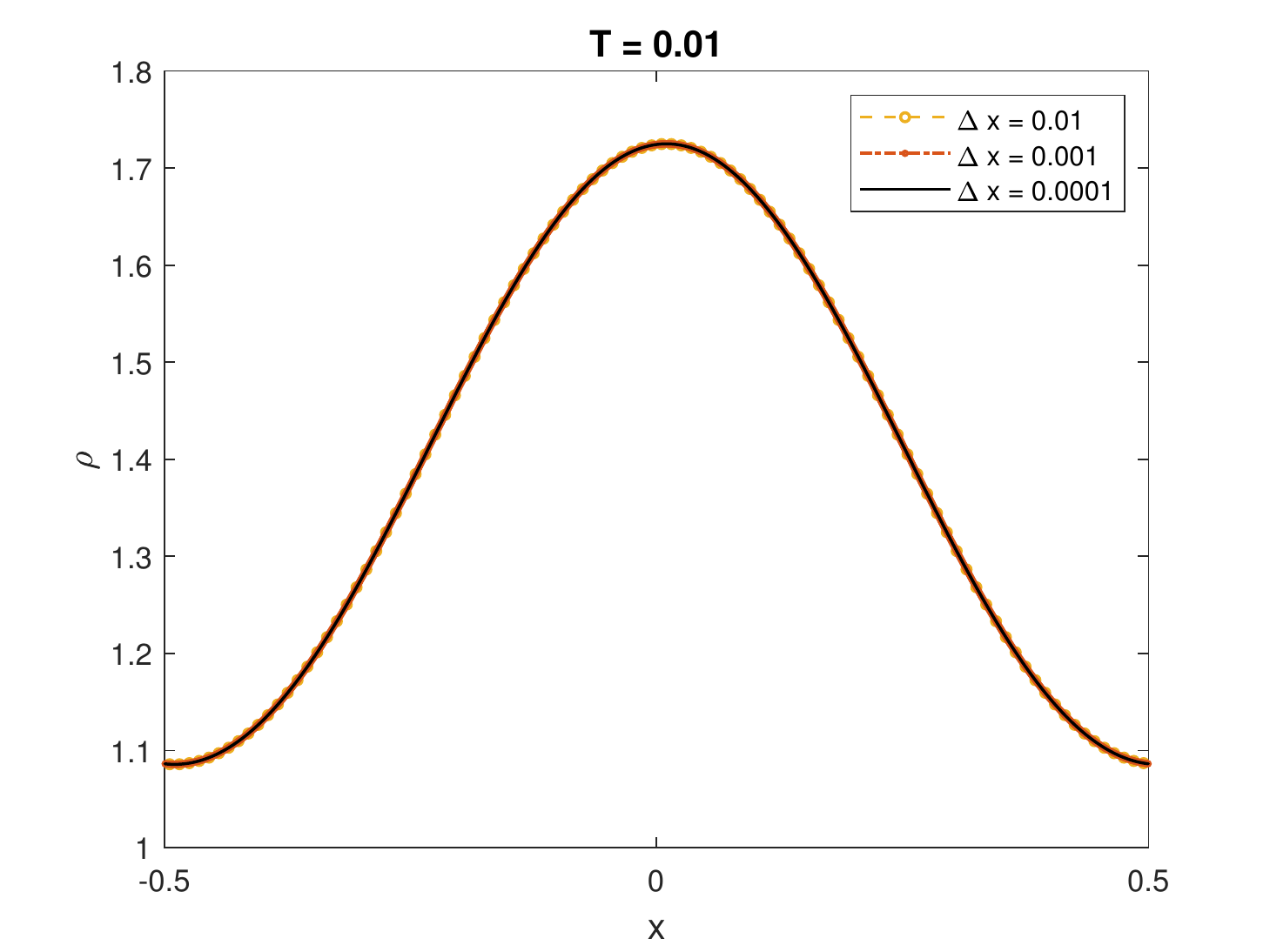}
  \caption{}
  \label{fig:para_AP}
\end{subfigure}%
\begin{subfigure}{.33\textwidth}
  \centering
  \includegraphics[width=\linewidth]{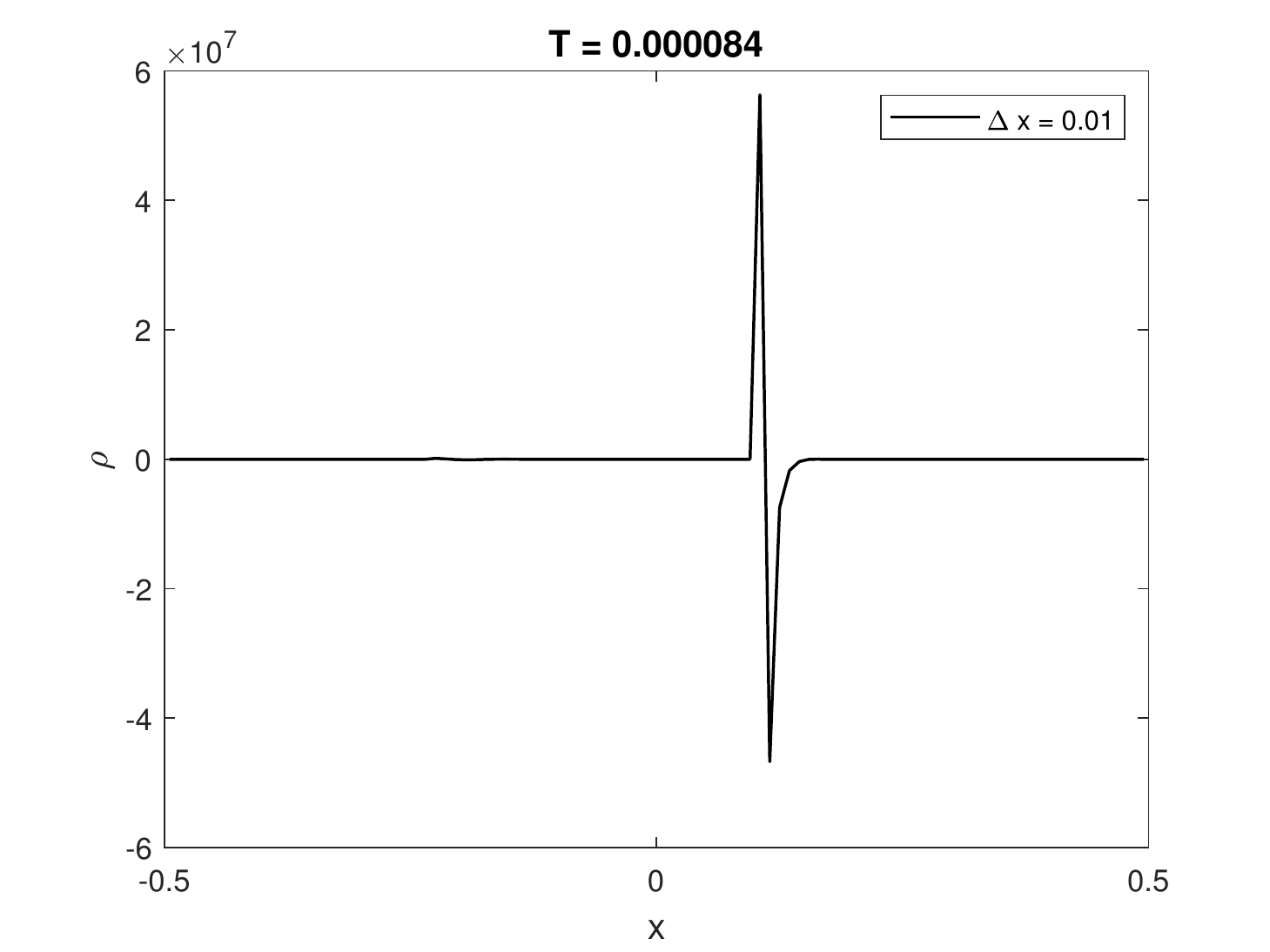}
  \caption{}
 \label{fig:para_NAP_blowup}
\end{subfigure}
\begin{subfigure}{.33\textwidth}
  \centering
  \includegraphics[width=\linewidth]{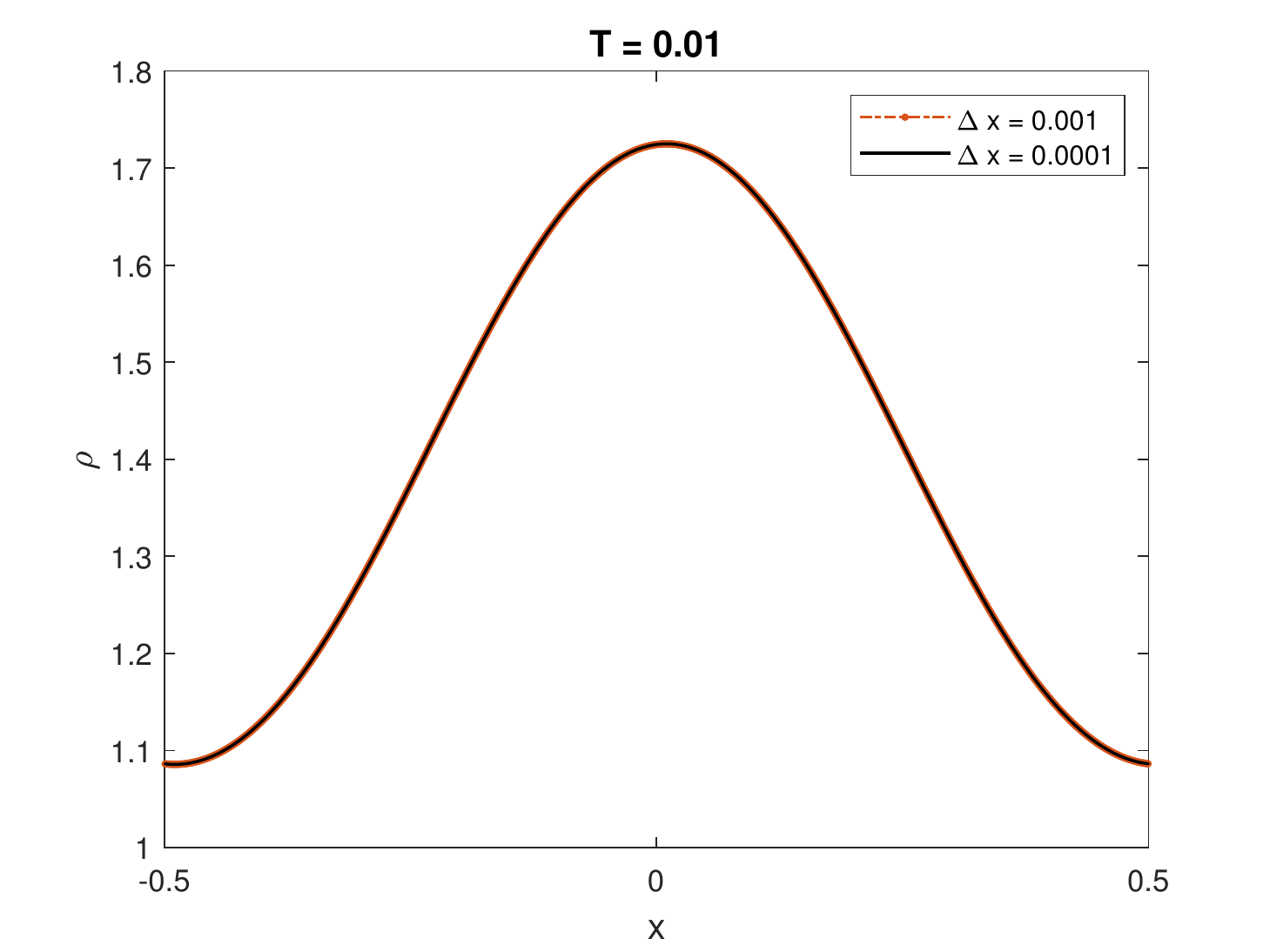}
  \caption{}
  \label{fig:para_NAP}
\end{subfigure}
\caption{Mesh sensitivity test: solution profile of $\rho$ for
  different mesh sizes for $\veps = 0.001$ (A) AP scheme (B) non
  AP scheme, $\Delta x = 0.01$ (C) non AP scheme, $\Delta x = 0.001,
  0.0001$.}  
\label{fig:sens_mesh_para}
\end{figure}

\begin{figure}[htbp]
\centering
\begin{subfigure}{.4\textwidth}
  \centering
  \includegraphics[width=\linewidth]{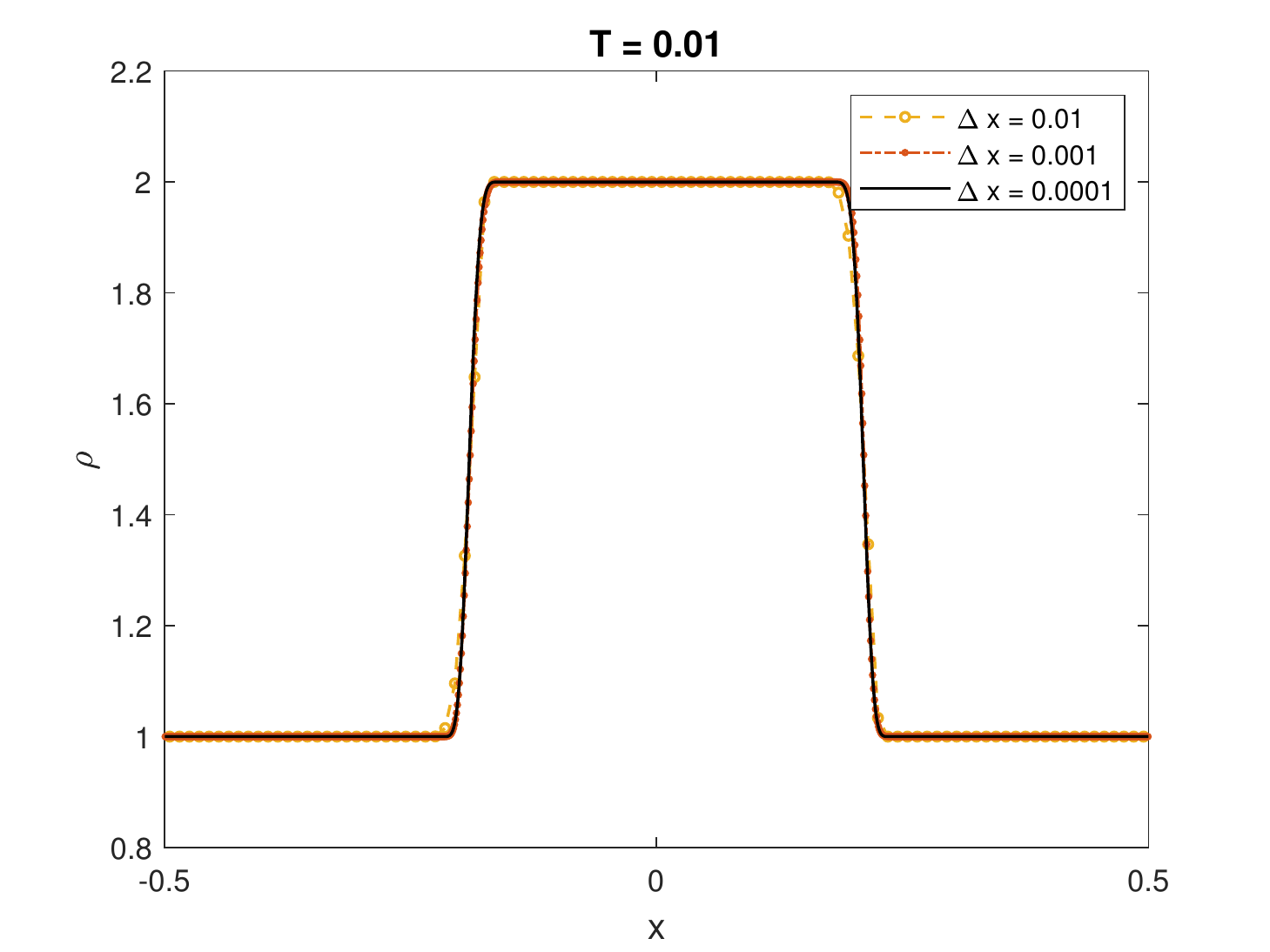}
  \caption{}
  \label{fig:hyp_AP}
\end{subfigure}%
\begin{subfigure}{.4\textwidth}
  \centering
  \includegraphics[width=\linewidth]{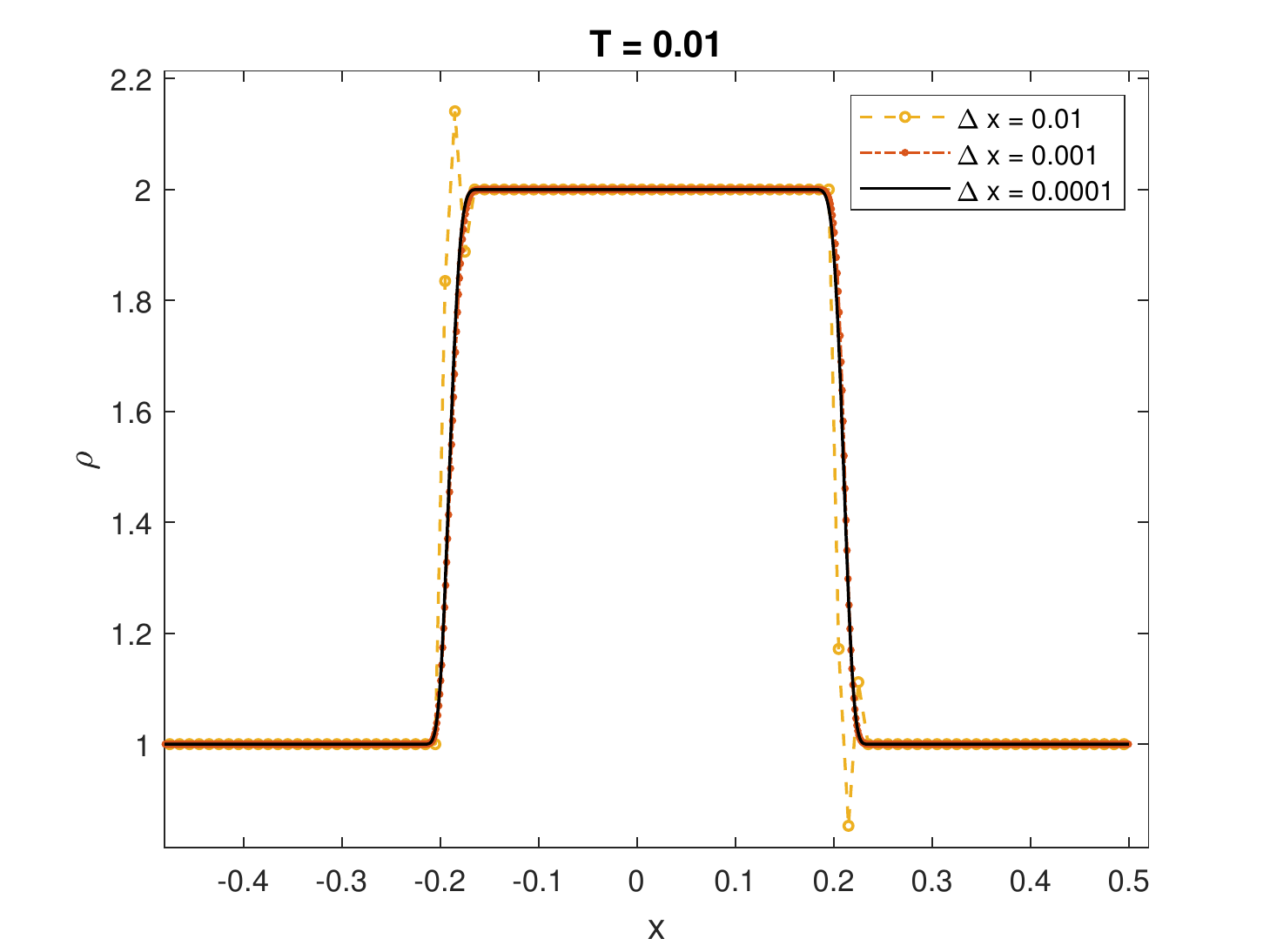}
  \caption{}
 \label{fig:hyp_NAP}
\end{subfigure}
\caption{Mesh sensitivity test: solution profile of $\rho$ for
  different mesh sizes for different mesh sizes when $\veps =
  0.001$ (A) AP scheme (B) non AP scheme.}  
\label{fig:sens_mesh_hyp}
\end{figure}

\section{Concluding Remarks}
\label{sec:con}

We have designed and analysed a unified AP scheme which captures both
the hyperbolic and parabolic limits of the Euler system with gravity
and friction. The time semi-discrete and semi-implicit scheme is based on
the ideas presented in \cite{BPR17}. A reformulation of the
semi-implicit scheme admits a fully-explicit formulation which is
stable under a parabolic CFL condition. Though for the hyperbolic
relaxation the time steps are dependent on $\veps$, the CFL
restriction does not degrade; rather it becomes less and less severe
as $\veps\to0$. A fully-discrete scheme is obtained using a finite
volume treatment which makes use of an equilibrium reconstruction of the 
interface values, source term upwinding and a gingerly choice of
central discretisation for the mass update. Both the semi-discrete and
space-time fully-discrete scheme are shown to be AP for both the
hyperbolic and parabolic limit equations. Furthermore, the
fully-discrete scheme is shown to well-balanced for hydrostatic steady
states. The numerical case studies presented clearly demonstrate
the AP and well-balancing properties of the developed scheme. They
also showcase the superiority of the designed scheme over its non
well-balanced and non AP counterparts. In conclusion, the aim of
developing a unified AP and well-balanced scheme is achieved and is
justified through the material presented in the paper.    

\bibliographystyle{abbrv}

\end{document}